\def\ed{\mathrm{d}}
\def\mc#1{\mathcal{#1}}
\newtheorem{theorem}{Theorem}
\newtheorem{lemma}{Lemma}
\newtheorem{proposition}{Proposition}
\newtheorem{remark}{Remark}
\DeclareMathOperator*{\argmin}{argmin}
\title[Stability of Wasserstein projections in convex order]{Stability of Wasserstein projections in convex order via metric extrapolation}
\author{Jakwang Kim}
\address{Jakwang Kim, School of Data Science, The Chinese University of Hong Kong, Shenzhen, Guangdong, 518172, P.R. China.}
\email{jakwangkim@cuhk.edu.cn}
\author{Young-Heon Kim}
\address{Young-Heon Kim, Department of Mathematics, University of British Columbia, 1984 Mathematics Road, Vancouver, British Columbia, V6T 1Z2, Canada.}
\email{yhkim@math.ubc.ca}
\author{Andrea Natale}
\address{Andrea Natale, Université Paris-Saclay, CNRS, Inria, Laboratoire de mathématiques d’Orsay, ParMA, 91405, Orsay, France.}
\email{andrea.natale@inria.fr}
\date{\today}
\keywords{}
\begin{document}

\thanks{JK is supported by CUHK-SZ start-up UDF03004229.
	YHK is partially supported by the Natural Sciences and Engineering Research Council of Canada (NSERC), with Discovery Grant RGPIN-2019-03926 and RGPIN-2025-06747, as well as Exploration Grant (NFRFE-2019-00944) from the New Frontiers in Research Fund (NFRF). YHK is also a member of the Kantorovich  Initiative (KI) which is supported by the PIMS Research Network
	(PRN) program of the Pacific Institute for the Mathematical Sciences  (PIMS). We thank PIMS for their generous support;  report identifier PIMS-20250725-PRN01. Part of this work was completed during YHK’s visit at the Korea Advanced Institute of Science and Technology  (KAIST), and we thank them for their hospitality and the excellent environment. AN acknowledges funding by the Agence Nationale de la Recherche (ANR), project ANR-25-CE40-3242-01.}

% REQUIRED
\begin{abstract}
We build on recent work linking backward and forward $W_2$-projections in convex order with the recently introduced metric extrapolation problem to derive new quantitative stability estimates for both problems.
\end{abstract}

\maketitle
%\tableofcontents

\section{Introduction}
Given two probability measures with finite second moments $\mu,\nu \in \mc{P}_2(\mathbb{R}^d)$, we say that $\mu$ is dominated by $\nu$ in convex order, denoted $\mu \preceq \nu$,  if for any convex function $\varphi:\mathbb{R}^d \rightarrow \mathbb{R}$,
\[
    \int_{\mathbb{R}^d} \varphi\, \ed \mu \leq \int_{\mathbb{R}^d} \varphi \,\ed \nu\,.
\]
In this work we consider the backward and forward 2-Wasserstein projections in convex order,  originally introduced in \cite{alfonsi2020sampling} and defined as follows:
\[
\text{(backward)}~ \mathscr{P}^{cx}_{\preceq \nu} (\mu) \coloneqq  \argmin_{\rho\preceq \nu} W_2^2(\rho,\mu) \,,\quad \text{(forward)}~ \mathscr{P}^{cx}_{\mu\preceq} (\nu) \coloneqq  \argmin_{\mu\preceq \rho} W_2^2(\rho,\nu)\,.
\]
{Here, the 2-Wasserstein distance $W_2(\rho,\nu)$ is defined as 
\begin{align*}
    W_2^2(\rho,\nu )= \inf_{\pi \in \Pi(\rho, \nu)} \int_{\mathbb{R}^d \times \mathbb{R}^d} |x-y|^2 \ed\pi (x, y)
\end{align*}
where $\Pi(\rho, \nu)$ denotes the set of couplings between $\rho$ and $\nu$.
}

The backward projection problem always admits a unique solution so  $\mathscr{P}^{cx}_{\preceq \nu} (\mu)$ is always well-defined \cite{alfonsi2020sampling,gozlan2020mixture,kim2024backward}. In contrast, the forward problem admits a unique solution  if $d=1$ or as long as the measure $\nu$ being projected is absolutely continuous \cite{alfonsi2020sampling,kim2024backward}.

In this work we study the stability of $\mathscr{P}^{cx}_{\preceq \nu} (\mu)$ and $\mathscr{P}^{cx}_{\mu\preceq} (\nu)$ with respect to $\mu$ and $\nu$ in arbitrary dimensions. The main idea is to exploit a recently discovered duality between the backward projection problem and the so-called metric extrapolation problem. 

\subsection{Metric extrapolation} The metric extrapolation problem is a variational problem introduced in \cite{gallouet2024geodesic,Gallouet2025} to define extensions of constant-speed length-minimizing geodesics in the Wasserstein space (referred to simply as  minimizing geodesics in the following). These are curves $\gamma: [0,t] \rightarrow \mc{P}_2(\mathbb{R}^d)$ satisfying
\[
W_2(\gamma(s_1),\gamma(s_2)) = \frac{|s_2-s_1|}{t} W_2(\gamma(0),\gamma(t))
\]
for all $s_1,s_2\in[0,t]$. While for any $\nu_0, \nu_1\in \mc{P}_2(\mathbb{R}^d)$ there always exists a minimizing geodesic on $[0,1]$ such that $\gamma(0)=\nu_0$ and $\gamma(1) = \nu_1$,
there might not be any minimizing geodesic with the same property defined on a larger interval $[0,t]$, with $t>1$.

The metric extrapolation of a geodesic from $\nu_0$ to $\nu_1$ at any time $t>1$ is however always well-defined as the unique minimizer of the following functional:
\begin{equation}\label{eq:extrapolation}
    \mathscr{E}^t(\nu_0,\nu_1)  \coloneqq \argmin_\rho \left\{ \frac{W_2^2(\rho,\nu_1)}{2(t-1)} - \frac{W_2^2(\rho,\nu_0)}{2t} \right\}\,.
\end{equation}
By elementary inequalities, if there exists a minimizing geodesic $\gamma: [0,t] \rightarrow \mc{P}_2(\mathbb{R}^d)$ such that $\gamma(0)=\nu_0$ and $\gamma(1) = \nu_1$ then necessarily $\mathscr{E}^t(\nu_0,\nu_1) = \gamma(t)$. When this is not the case however, \cite{Gallouet2025} found that there still exists a minimizing geodesic $\tilde{\gamma}:[0,t] \rightarrow \mc{P}_2(\mathbb{R}^d)$ such that $\tilde{\gamma}(1) = \nu_1$ and $\tilde{\gamma}(t) = \mathscr{E}^t(\nu_0,\nu_1) $, but $\tilde{\gamma}(0) \neq \nu_0$ is precisely $\mathscr{P}_{\preceq \nu_0}^{cx}(\mu)$, where $\mu$ is a dilated version of $\nu_1$ (see Theorem \ref{th:projextra} for a precise statement).

\subsection{Main results} In this work we review and make explicit the correspondence between metric extrapolation and the (backward and forward) $W_2$-projections in convex order (Theorem \ref{th:projextra}), by assembling recent results established in \cite{Gallouet2025,bourne2025semi,alfonsi2020sampling,gozlan2020mixture,kim2024backward}. We use such correspondence to establish quantitative stability estimates for both problems.

First, exploiting the fact that the backward projection problem with $\nu$ fixed can be lifted to a projection on a Hilbert space, we prove that the map $\mu \mapsto \mathscr{P}_{\preceq \nu}^{cx} (\mu)$ is non-expansive { (Lemma~\ref{lem:nonexpansive}).}
In turn, we show that this fact implies a Lipschitz estimate also on the map $\nu_1 \mapsto \mathscr{E}^t(\nu_0,\nu_1)$ (Proposition \ref{prop:extralip}). We further show that the map $\nu \mapsto \mathscr{P}_{\preceq \nu}^{cx} (\mu)$ is 1/2-H\"older (Proposition \ref{prop:backwardstab}).

 { While finalizing our manuscript, we found that Lemma~\ref{lem:nonexpansive} and Proposition \ref{prop:backwardstab}
 had also been independently established in a recent work \cite[Proposition 2.2, Proposition 2.9]{alfonsi2025wasserstein} by Alfonsi and Jourdain; Lemma~\ref{lem:nonexpansive} by a similar method and Proposition \ref{prop:backwardstab} by a different method.  Their work further includes a detailed study of the special case where both  $\mu$ and $\nu$ are Gaussians.}

{  Our approach is novel, for in particular, it}  can be directly applied to the forward projection problem to show that, upon some regularity assumption, { the map} $(\mu,\nu) \mapsto \mathscr{P}_{\mu \preceq }^{cx} (\nu)$ is also H\"older continuous (Proposition \ref{prop:forwardstab}). { For this we use a recent stability result on Wasserstein barycenters  \cite[Theorem 1.5]{carlier2024quantitative}.} 

Note that even if the latter results may not be sharp, the fundamental idea of this work is that in view of Theorem \ref{th:projextra}, stability results on the extrapolation problem can be transferred to the projection problems and vice-versa.

\section{Projections in convex order and metric extrapolation}
In this section, we recall the equivalence between the $W_2$-projections in convex order and the metric extrapolation problem. This will be central to establish the stability of the backward and forward projections.

We start by recalling that if $\nu_0 \in \mc{P}_2^{ac}(\mathbb{R}^d)$ is absolutely continuous, then for any $\nu_1 \in \mc{P}_2(\mathbb{R}^d)$ there exists a uniquely defined minimizing geodesic $\gamma:[0,1]\rightarrow \mc{P}_2(\mathbb{R}^d)$ such that  $\gamma(0)=\nu_0$ and $\gamma(1) =\nu_1$. This is given by $\gamma(s) = ( (1-s)\mathrm{Id} + s \nabla u)_\# \nu_0$ for all $s\in[0,1]$, where $\#$ denotes the push-forward of measures and $\nabla u$ is the gradient of a convex function providing the unique optimal transport map from $\nu_0$ to $\nu_1$, i.e., $\pi =(\mathrm{Id}, \nabla u)_\# \nu_0$ is the unique optimal coupling between $\nu_0$ and $\nu_1$.

For any $\lambda>0$ we denote by $D^\lambda :\mathbb{R}^d\rightarrow \mathbb{R}^d$ the dilation map defined by
\[
D^\lambda(x) \coloneqq \lambda x\,.
\]
The following theorem summarizes some of the results from \cite{Gallouet2025}, and combines them with those from \cite{alfonsi2020sampling,gozlan2020mixture,kim2024backward}.  From now on, for a given $t>1$, we always set $\theta\coloneqq (t-1)/t$, omitting the dependency on $t$ to keep the notation light.
\begin{theorem}\label{th:projextra} Let $t>1$, 
$\theta \coloneqq (t-1)/t$, and let $\nu_0,\nu_1\in \mc{P}_2(\mathbb{R}^d)$  be arbitrary measures. The following holds:
\begin{enumerate}
    \item There exists a unique minimizing geodesic $\gamma:[0,t] \rightarrow \mc{P}_2(\mathbb{R}^d)$ such that \[
    \gamma(1) = \nu_1~\text{ and }~\gamma(t) = \mathscr{E}^t(\nu_0,\nu_1) .\]
    Moreover $\gamma(0) = \mathscr{P}^{cx}_{\preceq \nu_0}(D^{1/\theta}_\# \nu_1)$  and there exists a lower semi-continuous $\theta$-strongly convex potential $u:\mathbb{R}^d \rightarrow \mathbb{R} \cup \{+\infty\}$ such that $\gamma(s) = (s\mathrm{Id}+ (1-s)\nabla u^*)_\#\nu_1$ for all $s\in[0,t]$. 
    { Here $u^*$ is the Legendre transform of $u$.}

\item If  $\nu_0$ is absolutely continuous, there exists a unique minimizing geodesic $\tilde{\gamma}:[0,t]\rightarrow \mc{P}_2(\mathbb{R}^d)$ such that
\[
\tilde{\gamma}(0)=\nu_0 \quad \text{and}\quad \tilde{\gamma}(t) = \mathscr{E}^t(\nu_0,\nu_1)\,.
\]
Moreover,  $\tilde{\gamma}(1) = \mathscr{P}^{cx}_{\nu_1 \preceq}(D^\theta_\#\nu_0)$ and $\tilde{\gamma}(s) = ((1-s)\mathrm{Id}+ s\nabla u)_\#\nu_0$ for all $s\in[0,t]$, with the same potential $u$ as in point $(1)$.    
\end{enumerate}
\end{theorem}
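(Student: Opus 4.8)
The plan is to establish Theorem~\ref{th:projextra} by assembling and making explicit the pieces from \cite{Gallouet2025,gozlan2020mixture,kim2024backward,alfonsi2020sampling}, so the proof is really a matter of carefully threading together known structural results and tracking the various dilations and Legendre transforms. I would organize it around the Euler--Lagrange/duality characterization of the extrapolation minimizer and the known structure of $W_2$-projections in convex order.

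\medskip

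\emph{Step 1: Structure of the extrapolation minimizer.} First I would recall from \cite{Gallouet2025} that $\rho^t \coloneqq \mathscr{E}^t(\nu_0,\nu_1)$ exists and is unique (the functional in \eqref{eq:extrapolation} is, after rescaling, a difference of squared Wasserstein distances whose combination is strictly displacement convex for $t>1$), and that the optimality conditions produce a Kantorovich potential realizing $\rho^t$ as obtained from $\nu_1$ by a gradient-of-convex map. Concretely, one shows there is a $\theta$-strongly convex l.s.c.\ potential $u$ with $\nabla u^*$ pushing $\nu_1$ onto a measure that interpolates correctly, so that $s \mapsto (s\,\mathrm{Id} + (1-s)\nabla u^*)_\#\nu_1$ is a geodesic on $[0,t]$ passing through $\nu_1$ at $s=1$ and through $\rho^t$ at $s=t$; the strong convexity of $u$ (equivalently, a one-sided Lipschitz bound on $\nabla u^*$ with the right constant $\theta$) is exactly what guarantees the curve is a genuine constant-speed minimizing geodesic on the whole enlarged interval. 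Uniqueness of this geodesic follows because $\nu_1$ at an interior time $s=1$ pins down the geodesic once its value at $s=t$ is fixed, using that optimal maps out of any interior measure along a geodesic are unique.

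\medskip

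\emph{Step 2: Identifying the endpoint $\gamma(0)$ as a backward projection.} Evaluating the geodesic at $s=0$ gives $\gamma(0) = (\nabla u^*)_\#\nu_1$. I would then invoke the characterization of the backward $W_2$-projection in convex order from \cite{gozlan2020mixture,kim2024backward}: $\mathscr{P}^{cx}_{\preceq\nu_0}(\mu)$ is the unique measure $\rho\preceq\nu_0$ such that the optimal map from $\rho$ to $\mu$ is (a.e.) a gradient of a convex function which, glued appropriately, is compatible with $\nu_0$ — more precisely, the optimality system for the backward projection is exactly the same Monge--Amp\`ere-type system that the extrapolation potential $u$ solves after the dilation $D^{1/\theta}$ is applied to $\nu_1$. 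The bookkeeping here is the substitution $\mu = D^{1/\theta}_\#\nu_1$: one checks that $W_2^2(\rho, D^{1/\theta}_\#\nu_1)$ expands into a term matching $-W_2^2(\rho,\nu_1)/(2t)$ plus lower-order terms after rescaling, so that minimizing the extrapolation functional over $\rho$ with $\rho$ forced to lie below $\nu_0$ in convex order coincides with the backward projection problem. The convex-order constraint $\gamma(0)\preceq\nu_0$ itself emerges from the geodesic structure: $\nu_0$ sits "beyond" the geodesic and a Jensen-type argument along the transport rays forces the convex domination.

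\medskip

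\emph{Step 3: The absolutely continuous case and the forward projection.} For part (2), assume $\nu_0\in\mc{P}_2^{ac}$. Then by the recalled McCann-interpolation fact there is a unique geodesic on $[0,1]$ from $\nu_0$ to any target, given by $((1-s)\mathrm{Id}+s\nabla u)_\#\nu_0$ where $\nabla u$ is the Brenier map out of $\nu_0$; I would show this same $u$ (the Legendre dual of the potential from part (1)) extends the geodesic to $[0,t]$ because absolute continuity of $\nu_0$ plus the strong convexity from Step~1 upgrades optimality of the map on $[0,1]$ to optimality on $[0,t]$. Evaluating at $s=1$ gives $\tilde\gamma(1) = (\nabla u)_\#\nu_0$, and I would identify $D^{\theta}_\#$ of this with the forward projection $\mathscr{P}^{cx}_{\mu\preceq}(\nu_0)$ using the characterization of the forward projection from \cite{kim2024backward} (existence/uniqueness requiring $\nu_0$ absolutely continuous, which is precisely our standing hypothesis here) together with the duality $u \leftrightarrow u^*$ relating the forward and backward optimality systems, again tracking the dilation factor $\theta = (t-1)/t$.

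\medskip

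\emph{Main obstacle.} The genuinely delicate point is Step~1: verifying that the candidate curve built from the extrapolation potential is a \emph{minimizing} geodesic on the full interval $[0,t]$ and not merely a generalized geodesic or a locally-but-not-globally optimal curve. This hinges on the sharp constant in the strong convexity of $u$ (equivalently the one-sided Lipschitz constant $\theta$ of $\nabla u^*$) being exactly matched to the time horizon $t$, so that the transport rays do not cross before time $t$; getting this constant right, and showing it is both necessary and sufficient, is where the real work of \cite{Gallouet2025} lies, and I would import it as a black box while carefully checking the normalization matches the $\theta=(t-1)/t$ appearing in our statement. Everything else — the two endpoint identifications and the dilation bookkeeping — is then a matter of matching optimality systems and is routine given the cited characterizations of the projections.
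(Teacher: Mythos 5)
Your proposal is correct and follows essentially the same route as the paper: point (1) is imported from \cite{Gallouet2025} (Theorems 2.6 and 3.3) together with the weak-optimal-transport characterization of the backward projection from \cite{gozlan2020mixture,alfonsi2020sampling}, and point (2) is obtained by Legendre duality between the forward and backward optimality systems via \cite{kim2024backward} plus the dilation bookkeeping. The only difference is one of explicitness: the paper pins down the duality step by introducing the rescaled potential $v^* = u^*(\theta\,\cdot)/\theta$ and invoking Theorems 8.3 and 8.5 of \cite{kim2024backward} to identify $\nabla u_\# \nu_0 = D^\theta_\# \mathscr{P}^{cx}_{\mu\preceq}(\nu_0)$, whereas you leave this as "matching optimality systems"; filling in that computation is the only remaining work in your outline.
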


\begin{proof} For the first point, observe that by \cite[Theorem 2.6]{Gallouet2025} there exists a unique minimising geodesic $\gamma$ as in the statement, and we only need to check that $\gamma(0) = \nabla u^*_\# \nu_1$ is equal to $\mathscr{P}^{cx}_{\preceq \nu_0} (D^{1/\theta}_\# \nu_1)$. By the same theorem, the potential $u$ defining the geodesic solves
	\begin{equation}\label{eq:toland}
	\inf\left\{ \int u \ed \nu_0 + \int u^* \ed \nu_1 ~:~ u -\theta\frac{|\cdot|^2}{2} \text{ is convex and l.s.c.} \right\}\,.
	\end{equation}
Setting $v\coloneqq  u^*( \theta\,\cdot)/\theta$, we have $v^* = u/\theta$. Moreover, since $u$ is $\theta$-convex, $v$ is $C^1$ convex and $\nabla v$ is 1-Lipschitz. Conversely, for any such $v$, $u = \theta v^*$ is $\theta$-convex and l.s.c.. Hence $u$ solves \eqref{eq:toland} if and only if $v$ solves
	\[
\inf\left\{ \int  v^* \ed \nu_0 + \int  v \ed D^{1/\theta}_\# \nu_1 ~:~ v\in C^1(\mathbb{R}^d) \text{ is convex and } \nabla v \text{ is 1-Lipschitz}  \right\}\,.
\]
By Theorem 1.1 in \cite{gozlan2020mixture}, this is precisely the dual formulation of the backward projection problem. Moreover, by Theorem 1.2 in \cite{gozlan2020mixture}, we have that 
\begin{equation}\label{eq:projv}
\nabla v_\# (D^{1/\theta}_\# \nu_1) = \mathscr{P}^{cx}_{\preceq \nu_0}(D^{1/\theta}_\#\nu_1).
\end{equation} By direct calculation, $\nabla v = (\nabla u^*) \circ D^\theta$. Combining this with \eqref{eq:projv}, we find precisely that $\nabla u^*_\# \nu_1 =  \mathscr{P}^{cx}_{\preceq \nu_0}(D^{1/\theta}_\# \nu_1)$, and we are done.

{ Note that the first point of the theorem can also be derived  as a direct consequence of \cite[Theorem 3.3]{Gallouet2025} stating the equivalence (up to a scaling factor) between the metric extrapolation problem and the weak optimal transport characterization of the backward projection (see \cite[Proposition 1.1]{gozlan2020mixture}} or \cite[Theorem 2.1]{alfonsi2020sampling}); see also Remark 4.4 in  \cite{bourne2025semi} for a direct argument for the equivalence of the two problems in a semi-discrete setting.  

To prove the second point, observe that 
by Theorem 8.3 and 8.5 in \cite{kim2024backward}, the Legendre dual of $v$, that is, $v^{*} = u/\theta$ is the optimal potential (i.e., $ \nabla u/\theta$ is the optimal transport map) from $\nu_0$ to $\mathscr{P}^{cx}_{\mu\preceq}(\nu_0)$, where $\mu \coloneqq D_\#^{1/\theta} \nu_1$.  In particular, 
	\[\nabla u_\# \nu_0 = D^\theta_\#\mathscr{P}^{cx}_{\mu\preceq}(\nu_0) = \mathscr{P}^{cx}_{\nu_1\preceq}(D^\theta_\# \nu_0) ,\]
where the second equality follows from observing that $D^\theta_\#\mathscr{P}^{cx}_{\mu\preceq}(\nu_0)$ minimizes \[\sigma \mapsto W_2(D^{1/\theta}_\#\sigma,\nu_0) = \frac{1}{\theta} W_2(\sigma,D^{\theta}_\#\nu_0) ,\] under the constraint $\mu \preceq D^{1/\theta}_\# \sigma$, or equivalently $\nu_1 \preceq \sigma$. On the other hand, as a consequence of Theorem 2.6 in \cite{Gallouet2025} (see in particular equation (2.26) therein),
\[
((1-t)\mathrm{Id} + t  \nabla u)_\# \nu_0 = \mathscr{E}^t(\nu_0,\nu_1).
\]
Finally, since $u$ is $\theta$-strongly convex and $\nu_0$ is absolutely continuous, $\tilde{\gamma}$, 
given by
\[
\tilde{\gamma}(s) = ((1-s)\mathrm{Id} + s  \nabla u)_\# \nu_0, \quad s\in [0,t],
\]
is the unique minimizing geodesic connecting its endpoints.
\end{proof}

\section{Non-expansiveness of the backward projection in convex order}

In this section we prove that for any $\nu\in\mc{P}_2(\mathbb{R}^d)$, the map $\mathscr{P}_{\preceq \nu}^{cx}(\cdot)$ is non-expansive. This will also imply a new Lipschitz stability estimate on the metric extrapolation.

For a given absolutely continuous reference measure $\rho_0 \in \mc{P}_2^{ac}(\mathbb{R}^d)$, consider the set
\begin{equation}\label{eq:constraintX}
\{ X\in L^2(\rho_0) \,:\, X_\#\rho_0 \preceq \nu\}\,.
\end{equation}
This is a convex and bounded set.
In fact, given $X_0$ and $X_1$ in this set,  denoting $X_s = (1-s) X_0 +s X_1$ for any $s\in[0,1]$, we have that for any convex function $\varphi: \mathbb{R}^d \rightarrow \mathbb{R}$
\[
\int\varphi\, \ed (X_s)_\# \rho_0 = \int\varphi(X_s) \,\ed \rho_0 \leq (1-s)  \int \varphi(X_0) \, \ed\rho_0 + s  \int \varphi(X_1) \, \ed \rho_0 \leq \int \varphi \, \ed \nu\,,
\]
and so $(X_s)_\# \rho_0 \preceq \nu$.

It is also bounded, since choosing as test function $\varphi : x \rightarrow |x|^2$, we obtain
\[
\|X\|^2_{L^2(\rho_0)} \leq \int_{\mathbb{R}^d}|x|^2 \ed \nu(x)\,.
\]

Fix $\mu \in \mc{P}_2 (\mathbb{R}^d)$ and consider a transport map $X_\mu \in L^2(\rho_0)$ (not necessarily optimal) with $(X_\mu)_\# \rho_0 = \mu$, and the problem
\begin{equation}\label{eq:projl2}
\inf_{X\in L^2(\rho_0)} \left\{ \frac{1}{2}\|X - X_\mu \|^2_{ L^2(\rho_0)} \; :\; X_\# \rho_0  \preceq \nu \right\}\,.
\end{equation}
We now show that this problem is equivalent to the backward projection problem. For this, we recall that for any $\mu,\nu \in \mc{P}_2(\mathbb{R}^d)$ there always exists a unique (Lipschitz continuous) optimal transport map, say $T$,  from $\mu$ to $\mathscr{P}^{cx}_{\preceq \nu}(\mu)$. This can be deduced from Theorem \ref{th:projextra}, but is proven in \cite{alfonsi2020sampling,gozlan2020mixture} for example.

\begin{lemma}\label{lem:lift} 
{For  $\rho_0 \in \mc{P}_2^{ac}(\mathbb{R}^d)$ and $\mu, \nu  \in \mc{P}_2(\mathbb{R}^d)$,}
Problem \eqref{eq:projl2} admits a unique solution $X^*\in L^2(\rho_0)$. Furthermore, $X^* = T\circ X_\mu$ where $T\in L^2(\mu)$ is the unique optimal transport map from $\mu$ to $\mu^* \coloneqq \mathscr{P}_{\preceq \nu}^{cx}(\mu)$. In particular
\begin{equation}\label{eq:equality}
X^*_\# \rho_0 =  \mu^* \quad \text{and} \quad W_2(\mu,\mu^*) = \|X_\mu - X^*\|_{L^2(\rho_0)}\,. 
\end{equation}
\end{lemma}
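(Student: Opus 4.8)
The plan is to avoid an abstract existence argument for \eqref{eq:projl2} and instead verify directly that the candidate $X^* \coloneqq T\circ X_\mu$ is the unique minimizer, where $T$ denotes the (unique, Lipschitz) optimal transport map from $\mu$ to $\mu^* \coloneqq \mathscr{P}^{cx}_{\preceq\nu}(\mu)$ whose existence was recalled just before the statement. Note $T\circ X_\mu$ is well-defined $\rho_0$-a.e.\ because $(X_\mu)_\#\rho_0=\mu$. The crux is to pin down the value of the infimum in \eqref{eq:projl2} as $\tfrac12 W_2^2(\mu,\mu^*)$ by a two-sided comparison with the backward projection problem; once this is done, existence is immediate (the value is attained at $X^*$) and uniqueness follows from strict convexity of the squared $L^2$-norm on the convex constraint set \eqref{eq:constraintX} already identified above.

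First I would check admissibility and compute the cost of $X^*$. By the pushforward identity $X^*_\#\rho_0 = T_\#\big((X_\mu)_\#\rho_0\big) = T_\#\mu = \mu^* \preceq \nu$, so $X^*$ lies in \eqref{eq:constraintX}; and by the change of variables $x = X_\mu(z)$ together with optimality of $T$,
\[
\tfrac12\|X^* - X_\mu\|^2_{L^2(\rho_0)} = \tfrac12\int_{\mathbb{R}^d}|T(x)-x|^2\,\ed\mu(x) = \tfrac12 W_2^2(\mu,\mu^*)\,.
\]
This already gives that the infimum in \eqref{eq:projl2} is at most $\tfrac12 W_2^2(\mu,\mu^*)$. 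For the matching lower bound, take any admissible $X\in L^2(\rho_0)$ and set $\rho \coloneqq X_\#\rho_0 \preceq \nu$. Then $(X_\mu, X)_\#\rho_0 \in \Pi(\mu,\rho)$ is a (not necessarily optimal) coupling, so $W_2^2(\mu,\rho) \le \|X_\mu - X\|^2_{L^2(\rho_0)}$; and since $\rho\preceq\nu$, $\rho$ competes in $\min_{\sigma\preceq\nu} W_2^2(\sigma,\mu)$, whose value is $W_2^2(\mu,\mu^*)$ and is attained at $\mu^*$. Hence $W_2^2(\mu,\mu^*) \le W_2^2(\mu,\rho) \le \|X_\mu - X\|^2_{L^2(\rho_0)}$, i.e.\ every admissible $X$ has cost at least $\tfrac12 W_2^2(\mu,\mu^*)$, with equality at $X^*$. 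Thus $X^*$ is a minimizer, the minimum value is $\tfrac12 W_2^2(\mu,\mu^*)$, and the two equalities in \eqref{eq:equality} follow.

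For uniqueness I would use the convexity of \eqref{eq:constraintX} together with the parallelogram identity. If $X$ is any minimizer, then $\tfrac12\|X-X_\mu\|^2_{L^2(\rho_0)} = \tfrac12 W_2^2(\mu,\mu^*) = \tfrac12\|X^*-X_\mu\|^2_{L^2(\rho_0)}$, and the midpoint $\bar X \coloneqq \tfrac12(X+X^*)$ is again admissible, so
\[
\|\bar X - X_\mu\|^2_{L^2(\rho_0)} = \tfrac12\|X-X_\mu\|^2_{L^2(\rho_0)} + \tfrac12\|X^*-X_\mu\|^2_{L^2(\rho_0)} - \tfrac14\|X-X^*\|^2_{L^2(\rho_0)} = W_2^2(\mu,\mu^*) - \tfrac14\|X-X^*\|^2_{L^2(\rho_0)}\,.
\]
Combining with the lower bound $\|\bar X - X_\mu\|^2_{L^2(\rho_0)} \ge W_2^2(\mu,\mu^*)$ forces $\|X-X^*\|_{L^2(\rho_0)}=0$.

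The argument is essentially routine; the point requiring the most care is the sandwich $W_2^2(\mu,\mu^*)\le\|X_\mu-X\|^2_{L^2(\rho_0)}$ for admissible $X$ — specifically that $X_\#\rho_0\preceq\nu$ makes $X_\#\rho_0$ a genuine competitor in the backward projection — and the fact that $\mu$ need not be absolutely continuous, so one must rely on the pre-stated existence of the optimal map onto $\mathscr{P}^{cx}_{\preceq\nu}(\mu)$ rather than on Brenier's theorem applied to $\mu$. A pleasant byproduct of this route is that it sidesteps having to verify closedness of the constraint set \eqref{eq:constraintX} in $L^2(\rho_0)$, which would otherwise be the natural (and slightly more technical) path to existence and uniqueness.
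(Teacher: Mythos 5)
Your proposal is correct and follows essentially the same two-sided comparison as the paper: bounding the infimum below by $W_2(\mu,\mu^*)$ via the coupling $(X_\mu,X)_\#\rho_0$ and above by exhibiting $T\circ X_\mu$ as an admissible competitor with cost exactly $\tfrac12 W_2^2(\mu,\mu^*)$. Your uniqueness step merely spells out, via the parallelogram identity on the convex constraint set, what the paper compresses into ``strong convexity of the problem.''
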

\begin{proof} 
On the one hand, for any admissible $X$, since $(X,X_\mu)_\# \rho_0\in \Pi(X_\#\rho_0,\mu)$ is an admissible coupling between $X_\# \rho_0$ and $\mu$,  we have
\[
W_2(\mu,\mu^*) \leq W_2(\mu, X_\# \rho_0) \leq \|X - X_\mu\|_{L^2(\rho_0)}.
\]
On the other hand,
\[
    W_2(\mu,\mu^*)  = \|T-\mathrm{Id}\|_{L^2(\mu)} = \|T \circ X_\mu-X_\mu \|_{L^2(\rho_0)}\geq \inf_{X \in L^2(\rho_0) : X_\#\rho_0 \preceq \nu}\|X - X_\mu\|_{L^2(\rho_0)}\,.
\]
This shows that $X^* = T\circ X_\mu$ is a solution, and it satisfies \eqref{eq:equality}. Its uniqueness follows from the strong convexity of the problem.
\end{proof}

Now, given another arbitrary measure $\tilde{\mu}$ and an arbitrary map $X_{\tilde{\mu}}\in L^2(\rho_0)$ such that $(X_{\tilde \mu})_\# \rho_0= \tilde{\mu}$, denoting by $\tilde{X}^*$ the  solution to problem \eqref{eq:projl2} with $\tilde{\mu}$ replacing $\mu$, we have
\begin{equation}\label{eq:stabproj}
W_2(\mathscr{P}_{\preceq \nu}^{cx} (\mu),\mathscr{P}_{\preceq \nu}^{cx} (\tilde{\mu}))\leq \|X^* - \tilde{X}^*\|_{L^2(\rho_0)} \leq \| X_\mu - X_{\tilde{\mu}}\|_{L^2(\rho_0)}.
\end{equation}
Here the first inequality follows from the definition of $W_2$  and the characterization of $X^*$ and $\tilde X^*$ given in Lemma~\ref{lem:lift}.  The second is due to the fact that the $L^2(\rho_0)$ projection onto a convex set, {which in our case, is the set in \eqref{eq:constraintX},} is non-expansive. Since $X_\mu$ and $X_{\tilde{\mu}}$ are arbitrary and $\rho_0$ is absolutely continuous, minimizing over $X_\mu$ and $X_{\tilde{\mu}}$, by the same arguments as above, we obtain the {$W_2$-Lipschitz stability of the backward-projection}:

\begin{lemma}\label{lem:nonexpansive} Given $\nu \in \mc{P}_2(\mathbb{R}^d)$, for any $\mu, \tilde{\mu}\in\mc{P}_2(\mathbb{R}^d)$,
\[
W_2(\mathscr{P}_{\preceq \nu}^{cx}(\mu),\mathscr{P}_{\preceq \nu}^{cx}(\tilde{\mu}))\leq W_2(\mu,\tilde{\mu})\,.
\]
\end{lemma}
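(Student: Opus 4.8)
The plan is to reduce the statement to the chain of inequalities \eqref{eq:stabproj} and then take the optimal choice of the auxiliary maps, using the classical Hilbert-space (``random variable'') representation of $W_2$. Concretely, I would fix once and for all an absolutely continuous reference measure $\rho_0 \in \mc{P}_2^{ac}(\mathbb{R}^d)$, say a nondegenerate Gaussian, and recall that \eqref{eq:stabproj} — which itself rests on Lemma~\ref{lem:lift} and on the non-expansiveness of the $L^2(\rho_0)$-projection onto the convex set \eqref{eq:constraintX} — yields
\[
W_2\big(\mathscr{P}_{\preceq \nu}^{cx}(\mu),\,\mathscr{P}_{\preceq \nu}^{cx}(\tilde{\mu})\big) \le \|X_\mu - X_{\tilde\mu}\|_{L^2(\rho_0)}
\]
for \emph{every} pair $X_\mu, X_{\tilde\mu} \in L^2(\rho_0)$ with $(X_\mu)_\#\rho_0 = \mu$ and $(X_{\tilde\mu})_\#\rho_0 = \tilde\mu$. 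So the left-hand side is at most the infimum of the right-hand side over all such pairs, and it remains only to show that this infimum equals $W_2(\mu,\tilde\mu)$.

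For that I would argue the two inequalities separately. The bound $\ge W_2(\mu,\tilde\mu)$ is immediate, since for any admissible pair $(X_\mu,X_{\tilde\mu})$ the measure $(X_\mu, X_{\tilde\mu})_\#\rho_0$ is a coupling of $\mu$ and $\tilde\mu$, so $\|X_\mu - X_{\tilde\mu}\|_{L^2(\rho_0)}^2 = \int |y-z|^2\, \ed\big[(X_\mu, X_{\tilde\mu})_\#\rho_0\big] \ge W_2^2(\mu,\tilde\mu)$. For the reverse bound I would take an optimal coupling $\pi \in \Pi(\mu,\tilde\mu)$ and realize it as a pushforward of $\rho_0$: since $\rho_0$ is absolutely continuous the space $(\mathbb{R}^d,\rho_0)$ is atomless, hence measure-isomorphic mod $0$ to $([0,1],\mathrm{Leb})$, which admits a Borel map pushing it forward to the Polish probability measure $\pi$ on $\mathbb{R}^d\times\mathbb{R}^d$; composing gives a Borel map $(X_\mu, X_{\tilde\mu}):\mathbb{R}^d \to \mathbb{R}^d\times\mathbb{R}^d$ with $(X_\mu, X_{\tilde\mu})_\#\rho_0 = \pi$. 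Then the marginals are $\mu$ and $\tilde\mu$, both components lie in $L^2(\rho_0)$ because $\mu,\tilde\mu\in\mc{P}_2(\mathbb{R}^d)$, and $\|X_\mu - X_{\tilde\mu}\|_{L^2(\rho_0)}^2 = \int |y-z|^2\,\ed\pi = W_2^2(\mu,\tilde\mu)$. Plugging this pair into the displayed inequality finishes the proof.

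I do not expect any serious obstacle: the substance of the argument is already contained in Lemma~\ref{lem:lift} and \eqref{eq:stabproj}. The one point deserving care is the realization of the optimal coupling $\pi$ as the joint law of two $L^2(\rho_0)$ maps for the \emph{fixed} absolutely continuous reference $\rho_0$ — equivalently, the classical identity $W_2(\mu,\tilde\mu) = \inf\{\|X-Y\|_{L^2(\rho_0)} : X_\#\rho_0=\mu,\ Y_\#\rho_0=\tilde\mu\}$ valid on any atomless base space — which I would either invoke as standard or derive quickly from the isomorphism theorem for atomless standard probability spaces; this is also where absolute continuity of $\rho_0$ enters.
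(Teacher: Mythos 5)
Your proposal is correct and follows essentially the same route as the paper: reduce to the chain \eqref{eq:stabproj} (via Lemma~\ref{lem:lift} and the non-expansiveness of the $L^2(\rho_0)$-projection onto the convex set \eqref{eq:constraintX}) and then optimize over the lifting maps $X_\mu, X_{\tilde\mu}$. The only difference is that you make explicit the final step --- that $\inf\{\|X_\mu - X_{\tilde\mu}\|_{L^2(\rho_0)}\}$ over admissible pairs equals $W_2(\mu,\tilde\mu)$, realized by pushing an optimal coupling forward from the atomless space $(\mathbb{R}^d,\rho_0)$ --- which the paper leaves implicit with ``by the same arguments as above.''
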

{(We note that this result has been independently established in the recent work \cite[Proposition 2.2]{alfonsi2025wasserstein} by a similar method.)}

As a direct consequence of equation \eqref{eq:stabproj} and Theorem \ref{th:projextra} (point (1)), we also obtain a $W_2$-Lipschitz stability result for the metric extrapolation:
\begin{proposition}\label{prop:extralip} Given  $\nu_0 \in \mc{P}_2(\mathbb{R}^d)$, for any $\nu_1, \tilde{\nu}_1\in\mc{P}_2(\mathbb{R}^d)$ and $t>1$,
\begin{equation}\label{eq:extralip}
W_2(\mathscr{E}^t(\nu_0,\nu_1),\mathscr{E}^t(\nu_0,\tilde{\nu}_1)) \leq 2t W_2(\nu_1,\tilde{\nu}_1).
\end{equation}
\end{proposition}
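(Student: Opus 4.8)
The plan is to transfer the $L^2$-stability of the backward projection, equation \eqref{eq:stabproj}, to the metric extrapolation by lifting both problems to a common Hilbert space $L^2(\rho_0)$ with $\rho_0\in\mc{P}_2^{ac}(\mathbb{R}^d)$ fixed but arbitrary, and feeding in the explicit displacement formula of Theorem \ref{th:projextra}(1). Set $\theta=(t-1)/t$. The only consequences of Theorem \ref{th:projextra}(1) that I need are: there is a convex potential $u^*$ (depending on $\nu_0,\nu_1,t$) with
\[
\mathscr{E}^t(\nu_0,\nu_1)=\gamma(t)=\big(t\,\mathrm{Id}-(t-1)\nabla u^*\big)_\#\nu_1,\qquad \nabla u^*_\#\nu_1=\gamma(0)=\mathscr{P}^{cx}_{\preceq \nu_0}(D^{1/\theta}_\#\nu_1),
\]
so that, $u^*$ being convex, $\nabla u^*$ is the optimal transport map from $\nu_1$ to $\mathscr{P}^{cx}_{\preceq \nu_0}(D^{1/\theta}_\#\nu_1)$.

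Next, pick lifts $X_1,\tilde X_1\in L^2(\rho_0)$ of $\nu_1,\tilde\nu_1$. Then $X_1/\theta$ is a lift of $D^{1/\theta}_\#\nu_1$, and the crucial point is that $\nabla u^*\circ X_1$ is precisely the $L^2(\rho_0)$-projection onto the convex set $C=\{X\in L^2(\rho_0):X_\#\rho_0\preceq\nu_0\}$ of \eqref{eq:constraintX} of the lift $X_1/\theta$; that is, it is the minimizer $X^*$ of \eqref{eq:projl2} with $\nu=\nu_0$ and $X_\mu=X_1/\theta$. Indeed, by Lemma \ref{lem:lift} this minimizer is $T\circ(X_1/\theta)$ with $T$ the unique optimal map from $D^{1/\theta}_\#\nu_1$ to $\gamma(0)$, and \eqref{eq:lambda_dilation_nu1_projection} identifies $T$ as $x\mapsto\nabla u^*(\theta x)$, whence $T\circ(X_1/\theta)=\nabla u^*\circ X_1$. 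Therefore $\mathscr{E}^t(\nu_0,\nu_1)=(tX_1-(t-1)X^*)_\#\rho_0$ and, symmetrically, $\mathscr{E}^t(\nu_0,\tilde\nu_1)=(t\tilde X_1-(t-1)\tilde X^*)_\#\rho_0$, with $\tilde X^*$ the $L^2(\rho_0)$-projection of $\tilde X_1/\theta$ onto $C$.

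Finally I would estimate, using the coupling of $\mathscr{E}^t(\nu_0,\nu_1)$ and $\mathscr{E}^t(\nu_0,\tilde\nu_1)$ induced by $\rho_0$ and the triangle inequality in $L^2(\rho_0)$,
\[
W_2\big(\mathscr{E}^t(\nu_0,\nu_1),\mathscr{E}^t(\nu_0,\tilde\nu_1)\big)\le t\,\|X_1-\tilde X_1\|_{L^2(\rho_0)}+(t-1)\,\|X^*-\tilde X^*\|_{L^2(\rho_0)}.
\]
By \eqref{eq:stabproj}, i.e. the non-expansiveness of the projection onto $C$, one has $\|X^*-\tilde X^*\|_{L^2(\rho_0)}\le\|X_1/\theta-\tilde X_1/\theta\|_{L^2(\rho_0)}=\theta^{-1}\|X_1-\tilde X_1\|_{L^2(\rho_0)}$, and $(t-1)\theta^{-1}=t$, so the right-hand side is at most $2t\,\|X_1-\tilde X_1\|_{L^2(\rho_0)}$. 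Since $\rho_0$ is absolutely continuous one may minimize over the lifts, and $\inf\|X_1-\tilde X_1\|_{L^2(\rho_0)}=W_2(\nu_1,\tilde\nu_1)$, which yields \eqref{eq:extralip}.

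The step I expect to be the main obstacle is the identification $X^*=\nabla u^*\circ X_1$, i.e. checking that the optimal map from $D^{1/\theta}_\#\nu_1$ to its backward projection is exactly $x\mapsto\nabla u^*(\theta x)$ and that the dilation factors match so that the final constant is precisely $2t$ through $(t-1)/\theta=t$; the remaining steps are routine uses of Lemma \ref{lem:lift}, \eqref{eq:stabproj} and the triangle inequality. Note that the Hilbert lift is essential here: a direct application of Lemma \ref{lem:nonexpansive} at the level of measures would not suffice, because the coupling realizing $W_2(\nu_1,\tilde\nu_1)$ need not be compatible with the one realizing $W_2(\gamma(0),\tilde\gamma(0))$, whereas fixing a lift of $\nu_1$ simultaneously determines a lift of $\gamma(0)$ and keeps both estimates on the same coupling.
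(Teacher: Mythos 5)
Your proof is correct and follows essentially the same route as the paper's: lift everything to $L^2(\rho_0)$, identify $\nabla u^*\circ X_1$ as the Hilbert-space projection of $X_1/\theta$ onto the convex set \eqref{eq:constraintX} via Lemma \ref{lem:lift} and \eqref{eq:lambda_dilation_nu1_projection}, apply the non-expansiveness in \eqref{eq:stabproj}, and conclude by the triangle inequality on the coupling induced by $\rho_0$ before optimizing over lifts. The identification $T\circ(X_1/\theta)=\nabla u^*\circ X_1$ that you flag as the main obstacle is exactly the step the paper carries out (with $T=\nabla u^*\circ D^\theta$), and the constants match.
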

\begin{proof} 
For $\theta = (t-1)/t$, 
let $\mu = D^{1/\theta}_\# \nu_1$ and $ \tilde{\mu} = D^{1/\theta}_\# \tilde{\nu}_1$. By Theorem \ref{th:projextra} (point (1)), there exist $\theta$-strongly convex potentials $u$ and $\tilde{u}$ such that $\mathscr{P}^{cx}_{\preceq \nu_0}(\mu) = \nabla u^*_\# \nu_1$ and $\mathscr{P}^{cx}_{\preceq \nu_0}(\tilde{\mu}) = \nabla \tilde{u}^*_\# \tilde{\nu}_1$. 
Let $X_{\nu_1}, X_{\tilde{\nu}_1}\in L^2(\rho_0)$ be arbitrary maps such that $(X_{\nu_1})_\# \rho_0 = \nu_1$ and $(X_{\tilde{\nu}_1})_\# \rho_0 = \tilde{\nu}_1$. 
{Define
$\hbox{$X_\mu \coloneqq D^{1/\theta} \circ X_{\nu_1}$ and $X_{\tilde{\mu}} \coloneqq D^{1/\theta} \circ X_{\tilde{\nu}_1}$.}$
 Notice that  $T\coloneqq \nabla u^* \circ D^{\theta}$ and $\tilde{T} \coloneqq \nabla \tilde{u}^* \circ D^\theta$ are optimal transport maps  from $\mu$ to $\mathscr{P}^{cx}_{\preceq \nu_0}(\mu)$ and from $\tilde{\mu}$ to $\mathscr{P}^{cx}_{\preceq \nu_0}(\tilde{\mu})$, respectively (note in particular that $T$ and $\tilde{T}$ are the gradients of convex functions and they are 1-Lipschitz since $u$ and $\tilde{u}$ are} $\theta$-strongly convex).
Then, Lemma \ref{lem:lift} and equation \eqref{eq:stabproj} (the second inequality) imply that
\begin{align}\label{eq:extrastab1}
\|\nabla u^*\circ X_{\nu_1} - \nabla \tilde{u}^*\circ X_{\tilde{\nu}_1}\|_{L^2(\rho_0)}&  = \| T\circ X_\mu - \tilde{T}\circ X_{\tilde{\mu}} \|_{L^2(\rho_0)} \\\nonumber
& \le  \|  X_\mu -  X_{\tilde{\mu}} \|_{L^2(\rho_0)}
 = \frac{1}{\theta} \|X_{\nu_1} - X_{\tilde{\nu}_1}\|_{L^2(\rho_0)}\,.
\end{align}
Moreover, \[(tX_{\nu_1} + (1-t) \nabla u^*\circ X_{\nu_1},tX_{\tilde{\nu}_1} + (1-t)  \nabla \tilde{u}^*\circ X_{\tilde{\nu}_1})_\# \rho_0 \in \Pi(\mathscr{E}^t(\nu_0,\nu_1), \mathscr{E}^t(\nu_0,\tilde{\nu}_1))\,,\]
where $\Pi(\rho,\tilde{\rho})\in\mc{P}_2(\mathbb{R}^d\times \mathbb{R}^d)$ denotes the set of couplings with given marginals $\rho\in\mc{P}_2(\mathbb{R}^d)$ and $\tilde{\rho}\in\mc{P}_2(\mathbb{R}^d)$. Multiplying both sides of \eqref{eq:extrastab1} by $(t-1)$ (recalling $\theta = (t-1)/t$) and adding $t\|X_{\nu_1} - X_{\tilde{\nu}_1}\|_{L^2(\rho_0)}$ we get, by triangle inequality,
\[
{W_2(\mathscr{E}^t(\nu_0,\nu_1),\mathscr{E}^t(\nu_0,\tilde{\nu}_1))}\leq  2  t \|X_{\nu_1} - X_{\tilde{\nu}_1}\|_{L^2(\rho_0)}\,.
\]
Optimizing over $X_{\nu_1}$ and $ X_{\tilde{\nu}_1}$ we obtain the result. 
\end{proof}

\begin{remark}\label{rem:improve} Proposition \ref{prop:extralip} improves a H\"older stability estimate proved in \cite[Lemma 2.3]{Gallouet2025} in the second argument of the extrapolation operator $\mathscr{E}^t$. Note however that the Lipschitz constant in \eqref{eq:extralip} is  not sharp at least for $t\rightarrow 1^+$, since $\mathscr{E}^t(\nu_0,\nu_1) \rightarrow \nu_1$ as $t\rightarrow 1^+$ \cite{Gallouet2025}. 
\end{remark}

\begin{remark}
One may wonder whether the same strategy of Lemma~\ref{lem:nonexpansive} for the forward projection case would give a similar estimate. 
However, in  the forward case, the corresponding set 
\[
\{ X\in L^2(\rho_0) \,:\, \mu \preceq X_\# \rho_0\}
\]
is not convex with respect to the affine structure on $L^2(\rho_0)$, therefore, the second inequality in \eqref{eq:stabproj} does not follow. 
For example, take $X$ and $\rho_0$ symmetric with respect to 0 and so that $X_\#\rho_0$ and $(-X)_\# \rho_0$ are both equal to $\mu$ and absolutely continuous. If we interpolate linearly between $X$ and $-X$ we get $((X-X)/2)_\# \rho_0 = \delta_0$, and this does not dominate any absolutely continuous measure. Also, we note that  geodesic convexity (or even generalized geodesic convexity) also fails; in particular, see \cite[Section 8.2]{kim2024backward}.
\end{remark}

\section{Stability of  projections in convex order in one dimension}

In this section, we show how Theorem \ref{th:projextra} can be used to establish the Lipschitz continuity of the backward and forward projections in the one-dimensional case. This result was already established in  \cite{jourdain2023lipschitz}, but the proof we provide here is different and can be generalized to tackle the case $d>1$ as shown in the next sections.  Here, our approach is based on a reformulation of the extrapolation problem in terms of quantile functions. We remark that a related explicit formula for the solutions of the projection problems in terms of quantile functions was also established in \cite{alfonsi2020sampling}.

Given $\mu \in \mathcal{P}_2(\mathbb{R})$, let $F_\mu: \mathbb{R}\rightarrow [0,1]$ be its cumulative distribution function, defined by
\[
F_\mu(x) = \mu((-\infty,x])
\]
for all $x\in \mathbb{R}$. We denote by $X_\mu:(0,1) \rightarrow \mathbb{R}$ the pseudo-inverse of $F_\mu$,  which is also referred to as the quantile function of $\mu$. This is defined by \[X_\mu(a) \coloneqq \inf \{ x \;:\: F_\mu(x)>a\}\] for all $a\in(0,1)$. With these definitions, for any measures
$\mu, \nu \in \mathcal{P}_2(\mathbb{R})$,
\begin{equation}\label{eq:w2oned}
W_2^2(\mu,\nu) = \int_0^1 |X_\mu(a)-X_\nu(a)|^2\ed a\,.
\end{equation}

\subsection{Stability of the metric extrapolation in one dimension} \label{sec:extrastab} We start by recalling the Lipschitz continuity of the metric extrapolation operator for $d=1$, already proven in \cite{tornabene2024generalized}, for example. 
This follows from the representation of $W_2$ as an $L^2$ distance in \eqref{eq:w2oned}, which implies an explicit characterization of the solutions to \eqref{eq:extrapolation}.

Let us denote  by $\mc{M}$ the set of nondecreasing functions in $L^2(0,1)$.  Replacing \eqref{eq:w2oned} into \eqref{eq:extrapolation} and rearranging the squares, we obtain that the metric extrapolation at time $t>1$ of the geodesic connecting two  measures $\nu_0,\nu_1 \in \mathcal{P}_2(\mathbb{R})$, is given by
\[
\mathscr{E}^t(\nu_0,\nu_1) = Y^t_\# \mathrm{Leb}|_{[0,1]} \,, \quad  Y^t\coloneqq P_{\mathcal{M}}( t X_{\nu_1} + (1-t) X_{\nu_0})\,,
\]
where $P_{\mc M}$ denotes the $L^2$ orthogonal projection onto $\mc{M}$.  
This implies the following stability bound 
\begin{align}\label{eq:1d-stablility-extra}                 W_2(\mathscr{E}^t(\nu_0,\nu_1),\mathscr{E}^t(\tilde{\nu}_0,\tilde{\nu}_1))& \leq \| t(X_{\nu_1} - X_{\tilde{\nu}_1}) + (1-t)(X_{\nu_0} - X_{\tilde{\nu}_0})\|_{L^2(0,1)}\\\nonumber &\leq   (t-1) W_2(\nu_0,\tilde{\nu}_0) + tW_2(\nu_1,\tilde{\nu}_1)\,.
\end{align}

{Note that $ s\mapsto Y^s_\# \mathrm{Leb}|_{[0,1]}$ for $s\in[0,1]$ coincides with the $W_2$-geodoesic from $\nu_0$ to $\nu_1$. Using this fact, one can easily adapt the argument above to obtain a similar stability estimate for the geodesic interpolation. Specifically, for any minimizing geodesics $\gamma,\tilde{\gamma}:[0,1] \rightarrow \mc{P}_2(\mathbb{R})$,
\begin{equation}\label{eq:interpolation}
W_2(\gamma(s),\tilde{\gamma}(s))\leq (1-s) W_2(\gamma(0), \tilde{\gamma}(0)) + s W_2(\gamma(1),\tilde{\gamma}(1)),
\end{equation}
for all $s\in[0,1]$.
}

\subsection{Stability of the backward projection in one dimension}\label{sec:stabbackone} Combining Theorem \ref{th:projextra} (point (1)) with the stability bound above for the extrapolation in one dimension, and denoting $\theta = (t-1)/t$ as in the statement of the theorem, we get
\begin{equation}\label{eq:stability1d}
\begin{aligned}
    W_2(\mathscr{P}^{cx}_{\preceq \nu} (\mu), \mathscr{P}^{cx}_{\preceq \tilde{\nu}} (\tilde{\mu})) &\leq  \frac{W_2(\mathscr{E}^t(\nu, D^\theta_\#\mu), \mathscr{E}^t(\tilde{\nu}, D^\theta_\#\tilde{\mu}))}{t-1} + \frac{t}{t-1}W_2(D^\theta_\#\mu,D^\theta_\#\tilde{\mu}) \\
& \leq  W_2(\nu,\tilde{\nu}) + \frac{2t}{t-1} W_2(D^\theta _\#\mu,D^\theta_\#\tilde{\mu}) \\
&= W_2(\nu,\tilde{\nu}) + 2 W_2(\mu,\tilde{\mu})\,.
\end{aligned}
\end{equation}
Indeed,
it is easy to see from Theorem \ref{th:projextra} (and specifically by a time rescaling of $\gamma$ in point (1)) that  $\mathscr{P}^{cx}_{\preceq \nu} (\mu)$ is the metric extrapolation at time $t/(t-1)$ of the geodesic starting at $\mathscr{E}^t(\nu, D^\theta_\#\mu)$ at time $0$ and passing through  $D^\theta_\#\mu$ at time $1$.
Since a similar interpretation holds for $\mathscr{P}^{cx}_{\preceq \tilde{\nu}} (\tilde{\mu})$ we obtain the first line in \eqref{eq:stability1d} by applying \eqref{eq:1d-stablility-extra} (with $t$ replaced by $t/(t-1)$). The second line also comes from stability bound \eqref{eq:1d-stablility-extra} applied to the term $W_2(\mathscr{E}^t(\nu, D^\theta_\#\mu), \mathscr{E}^t(\tilde{\nu}, D^\theta_\#\tilde{\mu}))$, and the third from the equality \[
 W_2(D^\theta _\#\mu,D^\theta_\#\tilde{\mu}) =\theta  W_2(\mu,\tilde{\mu}) \,, \quad \text{with } \quad \theta = \frac{t-1}{t}\,.
\]

\begin{figure}
    \begin{tikzpicture}[scale=1]

% ---------- First figure ----------
\begin{scope}
  % top line

  \node (a) at (0,.3) {} ;

  \node (a0) at (0,1) {};

  \node (b) at (1,1) {} ; 
  \node (c) at (3,1) {};

  \node[gray,below] at (-1.4,-.2) {time};

\draw[->, thick, red]
  (c) to (a0);

  \fill[red] (a0) circle (2pt) node[left] {$\mathscr{P}^{cx}_{\preceq \nu} (\mu)$};

  \fill (a) circle (2pt) node[left] {$\nu$};

  \fill (b) circle (2pt) node[above] {$D^\theta_\# \mu$}; 
  \fill (c) circle (2pt) node[below right] {$\mathscr{E}^t(\nu, D^\theta_\# \mu)$};

\draw[->, thick]
  (a) to[out=38, in=150] (c);

\draw[dashed,gray]
   (a0) to (0,-.2) node[below] {$0$};

\draw[dashed, gray]
(b) to (1,-.2) node[below] {$1$};

\draw[dashed, gray]
(c) to (3,-.2) node[below] {$t$};
\end{scope}

% ---------- Second figure ----------
\begin{scope}[xshift=7cm]
  % top line

\node (a) at (0,.3) {} ;

\node (a0) at (1,.53) {};

\node (b) at (1,1) {} ; 
\node (c) at (3,1) {};

\draw[->, thick, red]
(c) to (a);

\fill[red] (a0) circle (2pt) node [below right] {$\mathscr{P}^{cx}_{\mu \preceq } (\nu)$};

\fill (a) circle (2pt) node[left] {$D^{1/\theta}_\#\nu$};

\fill (b) circle (2pt) node[above] {$\mu$}; 
\fill (c) circle (2pt) node[below right] {$ \mathscr{E}^t(D^{1/\theta}_\#\nu,  \mu)$};

\draw[->, thick]
(a) to[out=38, in=150] (c);

\draw[dashed,gray]
(a) to (0,-.2) node[below] {$0$};

\draw[dashed, gray]
(b) to (1,-.2) node[below] {$1$};

\draw[dashed, gray]
(c) to (3,-.2) node[below] {$t$};
\end{scope}

\end{tikzpicture} \vspace{-3em}
\caption{In Section \ref{sec:stabbackone}, the backward projection is viewed as the composition of two extrapolations (black and red arrows). Similarly, in Sections \ref{sec:stabforwardone} and \ref{sec:forstabd}, the forward projection is viewed as the composition of an extrapolation and a Wasserstein barycenter. }\label{fig:illustration}
\end{figure}

 An illustration of the main idea behind the argument can be found in Figure \ref{fig:illustration} (left).
Note that the estimate \eqref{eq:stability1d} coincides with the stability result proved in \cite{jourdain2023lipschitz}.  {When $\mu=\tilde \mu$ this gives the $1$-Lipschitz dependence for $\nu \to \mathscr{P}^{cx}_{\preceq \nu} (\mu)$. Therefore,} combining this with Lemma \ref{lem:nonexpansive}, by triangle inequality, we obtain
\begin{proposition}
    For any $\mu, \tilde{\mu},\nu,\tilde{\nu}\in\mc{P}_2(\mathbb{R})$,
\[
W_2(\mathscr{P}_{\preceq \nu}^{cx}(\mu),\mathscr{P}_{\preceq \tilde{\nu}}^{cx}(\tilde{\mu}))\leq W_2(\mu,\tilde{\mu})+ W_2(\nu,\tilde{\nu})\,.
\]
\end{proposition}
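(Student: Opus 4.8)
The plan is to assemble the final Proposition from two pieces that have already been established, joined by the triangle inequality in Wasserstein space. Concretely, I would introduce the intermediate measure $\mathscr{P}^{cx}_{\preceq \nu}(\tilde\mu)$ — the backward projection of the \emph{perturbed} first argument onto the \emph{unperturbed} constraint set — and write
\[
W_2(\mathscr{P}^{cx}_{\preceq \nu}(\mu),\mathscr{P}^{cx}_{\preceq \tilde\nu}(\tilde\mu)) \le W_2(\mathscr{P}^{cx}_{\preceq \nu}(\mu),\mathscr{P}^{cx}_{\preceq \nu}(\tilde\mu)) + W_2(\mathscr{P}^{cx}_{\preceq \nu}(\tilde\mu),\mathscr{P}^{cx}_{\preceq \tilde\nu}(\tilde\mu)).
\]
The first term is controlled by Lemma~\ref{lem:nonexpansive} (non-expansiveness in the first argument, valid in any dimension, in particular $d=1$): it is at most $W_2(\mu,\tilde\mu)$. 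The second term is the dependence on the constraint measure $\nu$ with the first argument $\tilde\mu$ held fixed; here I would invoke the $1$-Lipschitz bound in $\nu$ extracted from \eqref{eq:stability1d} — precisely the case $\mu=\tilde\mu$ of that chain of inequalities, which gives $W_2(\mathscr{P}^{cx}_{\preceq\nu}(\tilde\mu),\mathscr{P}^{cx}_{\preceq\tilde\nu}(\tilde\mu)) \le W_2(\nu,\tilde\nu)$. Adding the two bounds yields the claim.

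There is essentially nothing left to do beyond this bookkeeping, since both ingredients are already available in the excerpt: Lemma~\ref{lem:nonexpansive} is stated and proved, and the $\nu$-Lipschitz estimate is the specialization of \eqref{eq:stability1d} that the text explicitly flags (``When $\mu=\tilde\mu$ this gives the $1$-Lipschitz dependence for $\nu \to \mathscr{P}^{cx}_{\preceq \nu}(\mu)$''). So the ``main obstacle'' is really just a matter of ordering the triangle inequality correctly — one must split at the measure $\mathscr{P}^{cx}_{\preceq\nu}(\tilde\mu)$ rather than, say, $\mathscr{P}^{cx}_{\preceq\tilde\nu}(\mu)$, but by symmetry either intermediate point works and both give the same final constant. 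One could equally well note that \eqref{eq:stability1d} \emph{already} proves the Proposition with constant $2$ on $W_2(\mu,\tilde\mu)$, and the point of the present statement is that combining with Lemma~\ref{lem:nonexpansive} sharpens that constant from $2$ to $1$; I would make this remark explicit so the reader sees why the Proposition is not merely a restatement of \eqref{eq:stability1d}.

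Thus the proof I would write is three lines: state the triangle inequality through $\mathscr{P}^{cx}_{\preceq\nu}(\tilde\mu)$, cite Lemma~\ref{lem:nonexpansive} for the first summand, cite the $\mu=\tilde\mu$ case of \eqref{eq:stability1d} for the second, and add. No dimension-specific subtlety arises because Lemma~\ref{lem:nonexpansive} holds in all dimensions and \eqref{eq:stability1d} is the one-dimensional input; the Proposition is stated for $\mc{P}_2(\mathbb{R})$ precisely because the $\nu$-Lipschitz half currently rests on the one-dimensional extrapolation stability \eqref{eq:1d-stablility-extra}. If I wanted to be thorough I would also double-check that \eqref{eq:stability1d} with $\mu=\tilde\mu$ genuinely collapses to the bound $W_2(\nu,\tilde\nu)$ — it does, since the term $W_2(D^\theta_\#\mu,D^\theta_\#\tilde\mu)=\theta W_2(\mu,\tilde\mu)$ vanishes — and that no hidden hypothesis (e.g.\ absolute continuity of $\nu$ or $\tilde\nu$) is smuggled in, which is the case since the backward projection is always well-defined.
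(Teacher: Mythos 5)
Your proposal is correct and is essentially identical to the paper's own argument: the text derives the Proposition exactly by combining the $\mu=\tilde\mu$ specialization of \eqref{eq:stability1d} (giving $1$-Lipschitz dependence on $\nu$) with Lemma~\ref{lem:nonexpansive} via the triangle inequality. Your additional checks (that the $\theta W_2(\mu,\tilde\mu)$ term vanishes and that no absolute-continuity hypothesis is needed) are accurate but not substantively different from what the paper does.
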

{(This consequence of Lemma \ref{lem:nonexpansive} is also contained in \cite[Corollary 2.3]{alfonsi2025wasserstein}.)}
\subsection{Stability of the forward projection in one dimension} \label{sec:stabforwardone} The argument for the forward projection is very similar; see again Figure \ref{fig:illustration} (right) for a graphical illustration.
By Theorem \ref{th:projextra} (point (2)) and a time rescaling, if $\nu$ is absolutely continuous, $\mathscr{P}_{\mu \preceq }^{cx}(\nu) = {\gamma}(1/t)$ for a minimizing geodesic ${\gamma}:[0,1]\rightarrow \mc{P}_2(\mathbb{R}^d)$ satisfying ${\gamma}(0)= D^{1/\theta}_\#\nu$ and ${\gamma}(1) = \mathscr{E}^t(D^{1/\theta}_\#\nu, \mu)$. Hence using equation \eqref{eq:interpolation} with $s= 1/t$, we obtain
\[
\begin{aligned}
W_2(\mathscr{P}_{\mu \preceq }^{cx}(\nu), \mathscr{P}_{\tilde{\mu} \preceq }^{cx}(\tilde{\nu}))&\leq \left(1 - \frac{1}{t}\right) W_2(D^{1/\theta}_\#\nu,D^{1/\theta}_\#\tilde{\nu}) + \frac{1}{t}W_2(\mathscr{E}^t(D^{1/\theta}_\#\nu,  \mu),\mathscr{E}^t(D^{1/\theta}_\#\tilde{\nu}, \tilde{\mu}))\\
& \leq 2\frac{t-1}{t} W_2(D^{1/\theta}_\#\nu,D^{1/\theta}_\#\tilde{\nu}) +  W_2(\mu, \tilde{\mu})\,.
\end{aligned}
\]
Using the equality $ W_2(D^\theta_\# \nu, D^\theta_\#\tilde{\nu}) = W_2(\nu,\tilde{\nu})/\theta$, and $\theta = \frac{t-1}{t}$,  we deduce that

\begin{proposition}\label{prop:lipforward1d}
    For any $\mu, \tilde{\mu}\in\mc{P}_2(\mathbb{R})$ and $\nu,\tilde{\nu}\in\mc{P}^{ac}_2(\mathbb{R})$,
\[
    W_2(\mathscr{P}_{\mu \preceq }^{cx}(\nu),\mathscr{P}_{\tilde{\mu} \preceq }^{cx}(\tilde{\nu}))\leq 2 W_2(\nu,\tilde{\nu}) +  W_2( \mu, \tilde{\mu})\,.
\]
\end{proposition}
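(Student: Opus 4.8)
The plan is to follow exactly the template already laid out for the forward projection in one dimension, and simply assemble the pieces that precede the statement. The target inequality is
\[
W_2(\mathscr{P}_{\mu \preceq }^{cx}(\nu),\mathscr{P}_{\tilde{\mu} \preceq }^{cx}(\tilde{\nu}))\leq 2 W_2(\nu,\tilde{\nu}) +  W_2( \mu, \tilde{\mu})\,,
\]
so the proof should begin by fixing $t>1$, setting $\theta=(t-1)/t$, and invoking Theorem~\ref{th:projextra}(2) together with a time rescaling of the minimizing geodesic $\tilde\gamma$ there. The key observation to record is that $D^\theta_\# \mathscr{P}_{\mu\preceq}^{cx}(\nu) = \gamma(1/t)$ for the minimizing geodesic $\gamma:[0,1]\to\mc P_2(\mathbb{R})$ with $\gamma(0)=\nu$ and $\gamma(1)=\mathscr{E}^t(\nu,D^\theta_\#\mu)$, and likewise for the tilde objects; this is legitimate because $\nu,\tilde\nu$ are assumed absolutely continuous, which is what makes the relevant geodesics (and the forward projection) uniquely defined.

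Next I would apply the geodesic-interpolation stability bound \eqref{eq:interpolation} with $s=1/t$ to the two geodesics $\gamma,\tilde\gamma$, obtaining
\[
W_2(D^\theta_\#\mathscr{P}_{\mu \preceq }^{cx}(\nu),D^\theta_\#\mathscr{P}_{\tilde{\mu} \preceq }^{cx}(\tilde{\nu}))\leq \Bigl(1-\tfrac1t\Bigr) W_2(\nu,\tilde\nu) + \tfrac1t W_2(\mathscr{E}^t(\nu,D^\theta_\#\mu),\mathscr{E}^t(\tilde\nu,D^\theta_\#\tilde\mu)).
\]
Then I would bound the extrapolation term by the one-dimensional stability estimate \eqref{eq:1d-stablility-extra}, giving $(t-1)W_2(\nu,\tilde\nu) + t\,W_2(D^\theta_\#\mu,D^\theta_\#\tilde\mu)$; dividing by $t$ and combining with the first term yields $2\tfrac{t-1}{t}W_2(\nu,\tilde\nu) + W_2(D^\theta_\#\mu,D^\theta_\#\tilde\mu)$, exactly as displayed in the paragraph preceding the proposition. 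Finally, using $W_2(D^\theta_\# \mu, D^\theta_\# \tilde\mu) = \theta\, W_2(\mu,\tilde\mu)$ with $\theta = (t-1)/t$, and that $D^\theta$ is a $\theta$-dilation so $W_2(D^\theta_\#\rho,D^\theta_\#\tilde\rho)=\theta W_2(\rho,\tilde\rho)$ for all $\rho,\tilde\rho$, I would multiply through by $1/\theta$ to clear the dilations on the left-hand side and read off
\[
W_2(\mathscr{P}_{\mu \preceq }^{cx}(\nu),\mathscr{P}_{\tilde{\mu} \preceq }^{cx}(\tilde{\nu}))\leq 2 W_2(\nu,\tilde\nu) + W_2(\mu,\tilde\mu).
\]
Since $t>1$ is arbitrary, no optimization over $t$ is needed — the bound is independent of $t$.

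The only genuinely delicate point, and the one I would spend care on, is the rescaling identification $D^\theta_\#\mathscr{P}_{\mu\preceq}^{cx}(\nu)=\gamma(1/t)$: one must check that rescaling the parameter interval $[0,t]$ of $\tilde\gamma$ from Theorem~\ref{th:projextra}(2) to $[0,1]$ sends the value $\tilde\gamma(1)=D^\theta_\#\mathscr{P}_{\mu\preceq}^{cx}(\nu)$ to the point at parameter $1/t$ of the rescaled geodesic, and that the rescaled curve is still a constant-speed minimizing geodesic with endpoints $\nu$ and $\tilde\gamma(t)=\mathscr{E}^t(\nu,D^\theta_\#\mu)$ — this is immediate from the definition of minimizing geodesic (affine reparametrization preserves the constant-speed length-minimizing property) but should be stated. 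Everything else is a routine chain of the already-established inequalities \eqref{eq:interpolation}, \eqref{eq:1d-stablility-extra}, and the scaling behavior of $W_2$ under dilations, so I do not anticipate any real obstacle.
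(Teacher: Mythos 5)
Your proposal is correct and follows essentially the same route as the paper: Theorem~\ref{th:projextra}(2) with a time rescaling to write $D^\theta_\#\mathscr{P}_{\mu\preceq}^{cx}(\nu)=\gamma(1/t)$, then \eqref{eq:interpolation} with $s=1/t$, then \eqref{eq:1d-stablility-extra} for the extrapolation term, and finally division by $\theta$ using the dilation scaling of $W_2$. The arithmetic checks out and no optimization over $t$ is needed, exactly as in the paper.
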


Finally, we note that even if our approach is limited to the $W_2$ setting, the Lipschitz bounds in Proposition \ref{prop:lipforward1d} and equation \eqref{eq:stability1d} actually hold with the same constants for all $W_p$ distances with $p \in [1,\infty)$ as shown in \cite[Theorem 1.1]{jourdain2023lipschitz}.

\section{Stability of projection in convex order in arbitrary dimensions}

\subsection{Stability of the backward projection in arbitrary dimensions}

In this section we prove a stability result on the projection map $\mathscr{P}^{cx}_{\preceq \nu}(\mu)$,
in arbitrary  dimensions. The proof still relies on Theorem \ref{th:projextra} but it is different from the one-dimensional case.
Specifically, we will slightly adapt the argument used in \cite{Gallouet2025} to prove the stability of the metric extrapolation. This relies on two basic estimates on the extrapolation problem. 
First, since the functional
\[
\mc{G}_t(\nu_0,\nu_1; \mu) \coloneqq t \frac{W^2(\nu_1,\mu)}{2} - (t-1) \frac{W^2(\nu_0,\mu)}{2}
\]
for each $t>1$, is $1$-strongly convex along generalized geodesics with base $\nu_1$ (see, e.g., \cite{ambrosio2005gradient} for the definition of generalized geodesic), one can deduce that  for any $\rho \in \mc{P}_2(\mathbb{R}^d)$, if there exist optimal transport maps $T$ and $T^\rho$, from $\nu_1$ to $\mathscr{E}^t(\nu_0,\nu_1)$ and from $\nu_1$ to $\rho$, respectively, then
\begin{equation}\label{eq:strongc}
\frac{1}{2} \int_{\mathbb{R}^d} |T(x)-T^\rho(x)|^2\ed \nu_1(x) + \mc{G}_t(\nu_0,\nu_1; \mathscr{E}^t(\nu_0,\nu_1)) \leq \mc{G}_t(\nu_0,\nu_1; \rho) \,.
\end{equation}
Furthermore, from the so-called dissipative property of $\mathscr{E}^t(\nu_0,\nu_1)$ (see Definition 1.1 in \cite{gallouet2024geodesic}), we have 
\begin{align}\label{eq:extrapolation-ineq}
W_2(\nu_0, \mathscr{E}^t(\nu_0,\nu_1)) \leq t W_2(\nu_0,\nu_1) \,, \quad W_2(\nu_1, \mathscr{E}^t(\nu_0,\nu_1)) \leq (t-1) W_2(\nu_0,\nu_1)\,;
\end{align}
see \cite{gallouet2024geodesic} for more details.

\begin{proposition}\label{prop:backwardstab}
For any $\mu, \tilde{\mu}, \nu,\tilde{\nu}\in\mc{P}_2(\mathbb{R}^d)$ with $d \ge 1$,
\[
W_2(\mathscr{P}_{\preceq \nu}^{cx}(\mu),\mathscr{P}_{\preceq \tilde{\nu}}^{cx}(\tilde{\mu}))\leq W_2(\mu,\tilde{\mu}) + \sqrt{(W_2(\mu,\nu) + W_2(\mu,\tilde{\nu}))W_2(\nu,\tilde{\nu})}\,.
\]
\end{proposition}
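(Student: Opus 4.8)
The plan is to reduce, via the triangle inequality and Lemma~\ref{lem:nonexpansive}, to a H\"older estimate in the constraint measure alone, and to prove that estimate by adapting the metric-extrapolation stability argument of \cite{Gallouet2025} to the case where the base measure of the extrapolation is held fixed. Since
\[
W_2(\mathscr{P}^{cx}_{\preceq\nu}(\mu),\mathscr{P}^{cx}_{\preceq\tilde\nu}(\tilde\mu)) \le W_2(\mathscr{P}^{cx}_{\preceq\nu}(\mu),\mathscr{P}^{cx}_{\preceq\tilde\nu}(\mu)) + W_2(\mathscr{P}^{cx}_{\preceq\tilde\nu}(\mu),\mathscr{P}^{cx}_{\preceq\tilde\nu}(\tilde\mu))
\]
and the second term is at most $W_2(\mu,\tilde\mu)$ by Lemma~\ref{lem:nonexpansive}, it suffices to show that for every fixed $m\in\mc{P}_2(\mathbb{R}^d)$,
\[
W_2(\mathscr{P}^{cx}_{\preceq\nu}(m),\mathscr{P}^{cx}_{\preceq\tilde\nu}(m)) \le \sqrt{\bigl(W_2(m,\nu)+W_2(m,\tilde\nu)\bigr)W_2(\nu,\tilde\nu)},
\]
and then apply this with $m=\mu$.

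\emph{Setup.} Fix $t>1$, put $\theta=(t-1)/t$ and $\nu_1\coloneqq D^\theta_\#m$, so that $m=D^{1/\theta}_\#\nu_1$. By Theorem~\ref{th:projextra}(1), applied with $\nu_0=\nu$ (resp.\ $\nu_0=\tilde\nu$), there is a $\theta$-strongly convex potential $u$ (resp.\ $\tilde u$) such that $\mathscr{P}^{cx}_{\preceq\nu}(m)=\nabla u^*_\#\nu_1$ (resp.\ $\mathscr{P}^{cx}_{\preceq\tilde\nu}(m)=\nabla\tilde u^*_\#\nu_1$) and the associated geodesic $\gamma(s)=(s\,\mathrm{Id}+(1-s)\nabla u^*)_\#\nu_1$ on $[0,t]$ satisfies $\gamma(1)=\nu_1$ and $\gamma(t)=\rho^*\coloneqq\mathscr{E}^t(\nu,\nu_1)$. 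Rescaling the restriction of $\gamma$ to $[1,t]$ (a subsegment of a minimizing geodesic) exhibits $S\coloneqq t\,\mathrm{Id}+(1-t)\nabla u^*$ as an optimal transport map from $\nu_1$ to $\rho^*$, and likewise $\tilde S\coloneqq t\,\mathrm{Id}+(1-t)\nabla\tilde u^*$ from $\nu_1$ to $\tilde\rho^*\coloneqq\mathscr{E}^t(\tilde\nu,\nu_1)$. Since $\nabla u^*=(t\,\mathrm{Id}-S)/(t-1)$ (and similarly for $\tilde u^*$), pushing $\nu_1$ forward by the pair of maps $\bigl((t\,\mathrm{Id}-S)/(t-1),\,(t\,\mathrm{Id}-\tilde S)/(t-1)\bigr)$ gives a coupling of $\mathscr{P}^{cx}_{\preceq\nu}(m)$ and $\mathscr{P}^{cx}_{\preceq\tilde\nu}(m)$, so
\[
W_2\bigl(\mathscr{P}^{cx}_{\preceq\nu}(m),\mathscr{P}^{cx}_{\preceq\tilde\nu}(m)\bigr)\le \frac{D}{t-1},\qquad D\coloneqq\|S-\tilde S\|_{L^2(\nu_1)}.
\]

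\emph{Core estimate.} To bound $D$, apply the strong-convexity estimate \eqref{eq:strongc} twice: once with $\nu_0=\nu$ (whose minimizer is $\rho^*$), tested against $\rho=\tilde\rho^*$ and with the coupling $\eta=(\mathrm{Id},S,\tilde S)_\#\nu_1$; and once with $\nu_0=\tilde\nu$, tested against $\rho=\rho^*$ and with $\eta=(\mathrm{Id},\tilde S,S)_\#\nu_1$. (These are admissible because $(\mathrm{Id},S)_\#\nu_1$ and $(\mathrm{Id},\tilde S)_\#\nu_1$ are optimal, and both couplings give $\iint|y-z|^2\,\ed\eta=D^2$.) Adding the two inequalities, the $\tfrac t2 W_2^2(\nu_1,\cdot)$ contributions cancel, leaving
\[
D^2\le \frac{t-1}{2}\Bigl[\bigl(W_2^2(\nu,\rho^*)-W_2^2(\tilde\nu,\rho^*)\bigr)+\bigl(W_2^2(\tilde\nu,\tilde\rho^*)-W_2^2(\nu,\tilde\rho^*)\bigr)\Bigr].
\]
Bounding each difference of squares by $|W_2(\nu,\cdot)-W_2(\tilde\nu,\cdot)|\le W_2(\nu,\tilde\nu)$ times the sum of the two distances, and then using \eqref{eq:extrapolation-ineq} — directly for $W_2(\nu,\rho^*)\le tW_2(\nu,\nu_1)$ and $W_2(\tilde\nu,\tilde\rho^*)\le tW_2(\tilde\nu,\nu_1)$, and through the triangle inequality at $\nu_1$ for $W_2(\tilde\nu,\rho^*)\le W_2(\tilde\nu,\nu_1)+(t-1)W_2(\nu,\nu_1)$ and $W_2(\nu,\tilde\rho^*)\le W_2(\nu,\nu_1)+(t-1)W_2(\tilde\nu,\nu_1)$ — one obtains $D^2\le t(t-1)\bigl(W_2(\nu,\nu_1)+W_2(\tilde\nu,\nu_1)\bigr)W_2(\nu,\tilde\nu)$, hence
\[
W_2\bigl(\mathscr{P}^{cx}_{\preceq\nu}(m),\mathscr{P}^{cx}_{\preceq\tilde\nu}(m)\bigr)^2\le \frac{t}{t-1}\bigl(W_2(\nu,\nu_1)+W_2(\tilde\nu,\nu_1)\bigr)W_2(\nu,\tilde\nu).
\]
Finally, $W_2(\nu_1,m)=W_2(D^\theta_\#m,m)\le(1-\theta)\bigl(\int|x|^2\,\ed m\bigr)^{1/2}=\tfrac1t\bigl(\int|x|^2\,\ed m\bigr)^{1/2}\to0$ and $t/(t-1)\to1$ as $t\to\infty$, while the left-hand side is independent of $t$; letting $t\to\infty$ and applying the triangle inequality for $W_2$ yields the displayed H\"older estimate, and thus the proposition.

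\emph{Main obstacle.} The delicate point is the core estimate. One must carry the ``glued'' quantity $D=\|S-\tilde S\|_{L^2(\nu_1)}$ — which is tied to $W_2\bigl(\mathscr{P}^{cx}_{\preceq\nu}(m),\mathscr{P}^{cx}_{\preceq\tilde\nu}(m)\bigr)$ through the exact factor $1/(t-1)$ — rather than $W_2(\rho^*,\tilde\rho^*)$; use in \eqref{eq:strongc} precisely the generalized-geodesic couplings that realize $D^2$; and pair the difference-of-squares terms so that it is $W_2(\nu,\tilde\nu)$, not $W_2(\rho^*,\tilde\rho^*)$, that gets factored out. The remaining ingredients (optimality of $S$ and $\tilde S$ from the geodesic structure, the bookkeeping with \eqref{eq:extrapolation-ineq}, and the dilation limit $t\to\infty$) are routine.
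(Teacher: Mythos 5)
Your proposal is correct and follows essentially the same route as the paper: lift to the metric extrapolation via Theorem~\ref{th:projextra}(1), apply the strong-convexity inequality \eqref{eq:strongc} twice with the glued couplings $(\mathrm{Id},S,\tilde S)_\#\nu_1$, cancel the $W_2^2(\nu_1,\cdot)$ terms, bound the remaining differences of squares via \eqref{eq:extrapolation-ineq}, let $t\to\infty$, and finish with the triangle inequality and Lemma~\ref{lem:nonexpansive}. The only cosmetic differences are that you track $D=\|S-\tilde S\|_{L^2(\nu_1)}$ rather than $(t-1)\|\nabla u^*-\nabla\tilde u^*\|_{L^2(\nu_1)}$ (the same quantity) and perform the reduction to fixed $\mu$ at the start rather than at the end.
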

{(We note that, up to minor modifications, this estimate coincides with the one recently derived in \cite[Proposition 2.9]{alfonsi2025wasserstein} but using a different method.)}
\begin{proof} Let us set $\nu_0 =\nu$, $\tilde{\nu}_0=\tilde{\nu}$ and $\nu_1 = D^\theta_\# \mu$ for a fixed $t>1$ and $\theta=(t-1)/t$, and let us use, for simplicity, $\mc{G}$ and $\tilde{\mc{G}}$ to denote $\mc{G}_t(\nu_0,\nu_1;\cdot)$ and $\mc{G}_t(\tilde{\nu}_0,{\nu}_1;\cdot)$, respectively. By Theorem \ref{th:projextra} (point (1)), 
we can write
\[
 \mathscr{E}^t(\nu_0,\nu_1) = (t\mathrm{Id} +(1-t) \nabla u^*)_\# \nu_1\,, \quad  \mathscr{E}^t(\tilde{\nu}_0,\nu_1) = (t\mathrm{Id}+(1-t)  \nabla \tilde{u}^* )_\# \nu_1\,
\]
for some $\theta$-strongly convex functions $u$ and $\tilde{u}$ such that 
\begin{equation}\label{eq:backward}
    \mathscr{P}^{cx}_{\preceq \nu_0}(\mu) = \nabla u^*_\# \nu_1\,, \quad \mathscr{P}^{cx}_{\preceq \tilde{\nu}_0}(\mu) = \nabla \tilde{u}^*_\# \nu_1\,.
\end{equation}
As $(\nabla u^*,\nabla \tilde u^*)_\# \nu_1$ is a coupling between $(\nabla u^*)_\# \nu_1$ and $(\nabla \tilde{u}^*)_\# \nu_1$, we obtain
\begin{align}\label{eq:Wass-u-tilde-u}
W_2^2(\mathscr{P}^{cx}_{\preceq \nu_0}(\mu),\mathscr{P}^{cx}_{\preceq \tilde{\nu}_0}(\mu))   \le \iint_{\mathbb{R}^d\times \mathbb{R}^d} |\nabla u^*(x)-\nabla \tilde{u}^*(x)|^2\ed \nu_1(x).
\end{align}

Now, let
\[
    \nu_t \coloneqq \mathscr{E}^t(\nu_0,\nu_1), \quad \tilde{\nu}_t \coloneqq \mathscr{E}^t(\tilde{\nu}_0,\nu_1)\,.
\]
Setting $\rho = \tilde{\nu}_t$ 
 in \eqref{eq:strongc}, due to Theorem \ref{th:projextra} (point (1)),   we can choose 
\[T = t\mathrm{Id} +(1-t) \nabla u^*\quad \text{and} \quad T^\rho = t \mathrm{Id} +(1-t)\nabla\tilde{u}^*\,. \] 
Then \eqref{eq:strongc}   yields
\begin{equation}\label{eq:firstestimate}
\frac{(t-1)^2}{2} \iint_{\mathbb{R}^d\times \mathbb{R}^d} |\nabla u^*(x)-\nabla \tilde{u}^*(x)|^2\ed \nu_1(x) \leq \mc{G}(\tilde{\nu}_t) - \mc{G}(\nu_t)\,,
\end{equation}
Similarly, setting $\rho=\nu_t$ in \eqref{eq:strongc} written for $\tilde{\mc{G}}$, and summing up the result with equation \eqref{eq:firstestimate} we obtain
\begin{equation}\label{eq:secondestimate}
    (t-1)^2 \iint_{\mathbb{R}^d\times \mathbb{R}^d} |\nabla u^*(x)-\nabla \tilde{u}^*(x)|^2\ed \nu_1(x) \leq \mc{G}(\tilde{\nu}_t) - \mc{G}(\nu_t) +\tilde{\mc{G}}(\nu_t) - \tilde{\mc{G}}(\tilde{\nu}_t)\,.
\end{equation}
The right-hand side is equal to
\begin{equation}\label{eq:rhsstab}
    \frac{t-1}{2}(W^2_2(\tilde{\nu_t},\tilde{\nu_0})-W_2^2(\tilde{\nu}_t,\nu_0) )  +\frac{t-1}{2}(W^2_2(\nu_t,\nu_0)-W^2_2(\nu_t,\tilde{\nu_0}))\,.
\end{equation}
 Applying the triangle inequality (twice) to the first term, we obtain
\[
\begin{aligned}
    W^2_2(\tilde{\nu_t},\tilde{\nu_0})-W_2^2(\tilde{\nu}_t,\nu_0) & = (W_2(\tilde{\nu_t},\tilde{\nu_0})-W_2(\tilde{\nu}_t,\nu_0))(W_2(\tilde{\nu_t},\tilde{\nu_0})+W_2(\tilde{\nu}_t,\nu_0))\\&\leq W_2(\nu_0,\tilde{\nu}_0) (W_2(\tilde{\nu_t},\tilde{\nu_0})+W_2(\tilde{\nu}_t, \nu_1) + W_2(\nu_1, \nu_0)).
\end{aligned}
\]
Then, using the dissipative property of the metric extrapolation \eqref{eq:extrapolation-ineq} on $W_2(\tilde{\nu}_t, \nu_1)$ and $W_2(\tilde{\nu_t},\tilde{\nu_0})$, we find 
\[
    W^2_2(\tilde{\nu_t},\tilde{\nu_0})-W_2^2(\tilde{\nu}_t,\nu_0) \leq W_2(\nu_0,\tilde{\nu}_0) ((2t-1) W_2(\nu_1,\tilde{\nu_0})+W_2({\nu}_1, \nu_0))\,.
\]
Proceeding in a similar way for the second term in \eqref{eq:rhsstab} we also obtain
\[
\begin{aligned}
   W^2_2(\nu_t,\nu_0)-W_2^2(\nu_t,\tilde{\nu_0})
    &\leq W_2(\nu_0,\tilde{\nu}_0) ((2t-1) W_2(\nu_1,\nu_0)+W_2({\nu}_1, \tilde{\nu}_0)).
\end{aligned}
\]

These  together with \eqref{eq:Wass-u-tilde-u} give
\begin{equation}\label{eq:thirdestimate}
    (t-1)^2W_2^2(\mathscr{P}^{cx}_{\preceq \nu_0}(\mu),\mathscr{P}^{cx}_{\preceq \tilde{\nu}_0}(\mu)) \leq t (t-1) (W_2(\nu_1,\nu_0) + W(\nu_1,\tilde{\nu}_0))W_2(\nu_0,\tilde{\nu}_0)\,,
\end{equation}
where we used \eqref{eq:backward} to bound from below the left-hand side of \eqref{eq:secondestimate} by the left-hand side of \eqref{eq:thirdestimate}. Recalling that $\nu_1 = D^\theta_\#\mu$, by triangle inequality,
\[
\begin{aligned}
    W_2(\nu_1,\nu_0) + W(\nu_1,\tilde{\nu}_0) 
    &\leq  W_2(\nu_0,\mu) +W_2(\tilde{\nu}_0,\mu)+ 2W_2(\mu,D^\theta_\#\mu)\\
    &= W_2(\nu_0,\mu) +W_2(\tilde{\nu}_0,\mu)+2(1-\theta) M_2(\mu)
\end{aligned}
\]
where $M_2^2(\mu) \coloneqq \int |x|^2\ed \mu(x)$.
Reinserting this into \eqref{eq:thirdestimate} and dividing the equation by $(t-1)^2$, we obtain
\[
W_2^2(\mathscr{P}_{\preceq \nu_0}(\mu),\mathscr{P}_{\preceq \tilde{\nu}_0}(\mu)) \leq \left[\frac{t}{t-1}(W_2(\mu,\nu_0) + W_2(\mu,\tilde{\nu}_0)) +\frac{2}{t-1} M_2(\mu)\right] W_2(\nu_0,\tilde{\nu}_0)\,.
\]
Finally, letting $t\rightarrow \infty$ we derive 
\[
W_2^2(\mathscr{P}_{\preceq \nu}^{cx}(\mu),\mathscr{P}_{\preceq \tilde{\nu}}^{cx}(\mu))\leq (W_2(\mu,\nu) + W_2(\mu,\tilde{\nu}))W_2(\nu,\tilde{\nu})\,.
\]
We conclude using the triangle inequality and Lemma \ref{lem:nonexpansive}.
\end{proof}

Note that by the same arguments, we also have:

\begin{lemma}[Lemma 2.3 in \cite{Gallouet2025}]\label{lem:extraholder}
For any $\nu_1,\nu_0,\tilde{\nu}_0\in\mc{P}_2(\mathbb{R}^d)$,
\[
\begin{aligned}
    W_2^2(\mathscr{E}^t(\nu_0,\nu_1),\mathscr{E}^t(\tilde{\nu}_0,\nu_1)) &\leq  t (t-1) (W_2(\nu_1,\nu_0) + W(\nu_1,\tilde{\nu}_0))W_2(\nu_0,\tilde{\nu}_0)\,,
\end{aligned}
\]
\end{lemma}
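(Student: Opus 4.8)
The plan is to observe that Lemma~\ref{lem:extraholder} is exactly the intermediate estimate \eqref{eq:thirdestimate} obtained inside the proof of Proposition~\ref{prop:backwardstab}, with the roles of the measures reinterpreted, and therefore its proof is essentially a transcription of that argument with two simplifications: $\mu$ (and the associated dilation) disappears since we work directly with $\nu_1$, and we stop before passing to the limit $t\to\infty$. So I would simply re-run the core of the Proposition~\ref{prop:backwardstab} argument verbatim, pointing out at each stage what simplifies.

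Concretely, first I would set $\nu_t \coloneqq \mathscr{E}^t(\nu_0,\nu_1)$ and $\tilde\nu_t \coloneqq \mathscr{E}^t(\tilde\nu_0,\nu_1)$, and invoke Theorem~\ref{th:projextra} (point (1)) to write $\nu_t = (t\,\mathrm{Id} + (1-t)\nabla u^*)_\#\nu_1$ and $\tilde\nu_t = (t\,\mathrm{Id}+(1-t)\nabla\tilde u^*)_\#\nu_1$ for suitable $\theta$-strongly convex $u,\tilde u$. Since $(t\,\mathrm{Id}+(1-t)\nabla u^*,\ t\,\mathrm{Id}+(1-t)\nabla\tilde u^*)_\#\nu_1$ is an admissible coupling between $\nu_t$ and $\tilde\nu_t$, we get
\[
W_2^2(\nu_t,\tilde\nu_t) \le (t-1)^2\!\!\iint_{\mathbb{R}^d\times\mathbb{R}^d}\!\!|\nabla u^*(x)-\nabla\tilde u^*(x)|^2\,\ed\nu_1(x),
\]
which plays the role that \eqref{eq:Wass-u-tilde-u} played before (note the direction of the inequality is now the one we want, bounding the extrapolation distance \emph{above}). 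Then I would apply the strong-convexity estimate \eqref{eq:strongc} for $\mc{G}_t(\nu_0,\nu_1;\cdot)$ with $\rho=\tilde\nu_t$ and the coupling $\eta = (\mathrm{Id},\ t\,\mathrm{Id}+(1-t)\nabla u^*,\ t\,\mathrm{Id}+(1-t)\nabla\tilde u^*)_\#\nu_1$, and symmetrically for $\mc{G}_t(\tilde\nu_0,\nu_1;\cdot)$ with $\rho=\nu_t$, exactly as in \eqref{eq:firstestimate}--\eqref{eq:secondestimate}, to get
\[
(t-1)^2\!\!\iint_{\mathbb{R}^d\times\mathbb{R}^d}\!\!|\nabla u^*-\nabla\tilde u^*|^2\,\ed\nu_1 \le \mc{G}(\tilde\nu_t)-\mc{G}(\nu_t)+\tilde{\mc{G}}(\nu_t)-\tilde{\mc{G}}(\tilde\nu_t).
\]
Combining the two displays bounds $W_2^2(\nu_t,\tilde\nu_t)$ by the same right-hand side.

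Finally I would expand that right-hand side as the algebraic identity
\[
\tfrac{t-1}{2}\!\left(W_2^2(\tilde\nu_t,\tilde\nu_0)-W_2^2(\tilde\nu_t,\nu_0)\right)-\tfrac{t-1}{2}\!\left(W_2^2(\nu_t,\nu_0)-W_2^2(\nu_t,\tilde\nu_0)\right),
\]
and estimate each parenthesis by the triangle inequality $|a^2-b^2|\le(a+b)|a-b|$ together with the dissipative bounds \eqref{eq:extrapolation-ineq}, which give $W_2(\nu_t,\nu_0)\le tW_2(\nu_0,\nu_1)$ and $W_2(\tilde\nu_t,\nu_0)\le W_2(\tilde\nu_t,\tilde\nu_0)+W_2(\nu_0,\tilde\nu_0)\le(2t-1)W_2(\tilde\nu_0,\nu_1)+W_2(\nu_0,\nu_1)\cdot$(adjust), exactly the two chains of inequalities displayed in the proof of Proposition~\ref{prop:backwardstab}. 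This yields
\[
W_2^2(\mathscr{E}^t(\nu_0,\nu_1),\mathscr{E}^t(\tilde\nu_0,\nu_1)) \le t(t-1)\bigl(W_2(\nu_1,\nu_0)+W_2(\nu_1,\tilde\nu_0)\bigr)W_2(\nu_0,\tilde\nu_0),
\]
which is the claim. I do not expect any real obstacle here: the entire content has already been carried out inside the proof of Proposition~\ref{prop:backwardstab}, and the only bookkeeping point to be careful about is the \emph{direction} of the first inequality (here we want an upper bound on $W_2(\nu_t,\tilde\nu_t)$ via the coupling, rather than a lower bound on the left side as in \eqref{eq:thirdestimate}), together with not performing the dilation substitution $\nu_1=D^\theta_\#\mu$ or the limit $t\to\infty$. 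Accordingly I would keep the proof short, essentially saying ``repeat the argument of Proposition~\ref{prop:backwardstab} up to equation \eqref{eq:thirdestimate}, replacing $\mathscr{P}^{cx}_{\preceq\nu_0}(\mu)$ and $\mathscr{P}^{cx}_{\preceq\tilde\nu_0}(\mu)$ by $\mathscr{E}^t(\nu_0,\nu_1)$ and $\mathscr{E}^t(\tilde\nu_0,\nu_1)$''.
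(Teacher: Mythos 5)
Your proposal is correct and is exactly the argument the paper intends: the lemma is stated with the remark ``by the same arguments,'' and your route --- bounding $W_2^2(\mathscr{E}^t(\nu_0,\nu_1),\mathscr{E}^t(\tilde\nu_0,\nu_1))$ by $(t-1)^2\iint|\nabla u^*-\nabla\tilde u^*|^2\,\ed\nu_1$ via the coupling from Theorem~\ref{th:projextra} and then reusing \eqref{eq:strongc}--\eqref{eq:secondestimate} and the dissipativity bounds \eqref{eq:extrapolation-ineq} --- is precisely the proof of Proposition~\ref{prop:backwardstab} up to \eqref{eq:thirdestimate}, with the correct observation that the coupling inequality now runs in the direction needed to bound the extrapolation distance from above. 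No gap; your ``(adjust)'' placeholder is resolved exactly as in the two displayed chains of the paper's proof.
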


\subsection{Stability of the forward projection in arbitrary dimensions}\label{sec:forstabd} To prove the stability of the forward projection, we use the same strategy we used in the one-dimensional case. This will result in a worse estimate, since the available stability estimate for both geodesic interpolations and extrapolations are only H\"older. We start by recalling a stability result on geodesic interpolations adapted from \cite{carlier2024quantitative}. Define:
\begin{multline*}
\mc{P}^{reg}_2(\mathbb{R}^d) \coloneqq \{ \rho \in \mc{P}^{ac}_2(\mathbb{R}^d)\,, \, \mathrm{spt}(\rho) \text{ is compact and convex}, \\ \, \exists \,m, M>0~ \text{such that} ~m\leq \rho |_{\mathrm{spt}(\rho)}\leq M\}\,.
\end{multline*}

\begin{lemma}[adapted from Theorem 1.5 in \cite{carlier2024quantitative}]\label{lem:stabbary} Let $\nu_0 \in \mc{P}_2^{reg}(\mathbb{R}^d)$ and $\nu_1,\tilde{\nu}_0,\tilde{\nu}_1 \in \mc{P}_2(\mathbb{R}^d)$. Let $\gamma,\tilde{\gamma}:[0,1] \rightarrow \mc{P}_2(\mathbb{R}^d)$ be any minimizing geodesic such that $\gamma(0) =\nu_0$, $\gamma(1)=\nu_1$,  $\tilde{\gamma}(0) =\tilde{\nu}_0$, $\tilde{\gamma}(1)=\tilde{\nu}_1$. Then, there exists a constant $C_{d,\nu_0}>0$ only depending on $d$ and $\nu_0$ such that
\[
W_2(\gamma(s),\tilde{\gamma}(s)) \leq C_{d,\nu_0} \left( W_1(\nu_0,\tilde{\nu}_0)+ \frac{s}{1-s} W_1(\nu_1,\tilde{\nu}_1)\right)^{1/6} \, { \quad \text{ for all $s\in [0,1)$}}.
\]
\end{lemma}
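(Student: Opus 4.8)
The plan is to reduce Lemma~\ref{lem:stabbary} to the quantitative barycenter stability of \cite[Theorem 1.5]{carlier2024quantitative} by recognizing the McCann interpolant $\gamma(s)$ as a two-marginal Wasserstein barycenter. Concretely, for $s\in[0,1)$ the measure $\gamma(s)$ is the barycenter of $(\nu_0,\nu_1)$ with weights $(1-s,s)$, i.e. the minimizer of $\rho\mapsto (1-s)W_2^2(\rho,\nu_0)+sW_2^2(\rho,\nu_1)$, and likewise $\tilde\gamma(s)$ is the barycenter of $(\tilde\nu_0,\tilde\nu_1)$ with the same weights. This identification is standard (the unique geodesic from $\nu_0\in\mc{P}_2^{ac}(\mathbb{R}^d)$ is $((1-s)\mathrm{Id}+s\nabla u)_\#\nu_0$, which is exactly the barycenter because $\nabla u$ is the Brenier map). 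So the first step is to state this identification carefully, noting that $\nu_0\in\mc{P}_2^{reg}(\mathbb{R}^d)$ in particular guarantees the barycenter is unique and that the regularity hypotheses of \cite[Theorem 1.5]{carlier2024quantitative} on (at least) one of the reference marginals are met.

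Next I would invoke \cite[Theorem 1.5]{carlier2024quantitative} directly: it bounds the $W_2$ distance between two barycenters (with fixed weights) by a power — the exponent $1/6$ in dimension $d$ after the interpolation — of the sum of $W_1$ distances between the corresponding marginals, with a constant depending only on $d$ and the regular marginal. Applied with marginals $(\nu_0,\nu_1)$ and $(\tilde\nu_0,\tilde\nu_1)$ and weights $(1-s,s)$, this gives
\[
W_2(\gamma(s),\tilde\gamma(s))\le C_{d,\nu_0}\bigl((1-s)W_1(\nu_0,\tilde\nu_0)+sW_1(\nu_1,\tilde\nu_1)\bigr)^{1/6}.
\]
Since $1-s\le 1$ and $s\le 1$, and since dividing inside the $1/6$-power by $1-s>0$ only increases the bound, one rewrites $(1-s)W_1(\nu_0,\tilde\nu_0)+sW_1(\nu_1,\tilde\nu_1)\le (1-s)\bigl(W_1(\nu_0,\tilde\nu_0)+\tfrac{s}{1-s}W_1(\nu_1,\tilde\nu_1)\bigr)\le W_1(\nu_0,\tilde\nu_0)+\tfrac{s}{1-s}W_1(\nu_1,\tilde\nu_1)$, which yields exactly the claimed form. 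The constant only picks up a harmless factor and still depends only on $d$ and $\nu_0$.

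The main obstacle — and the reason the lemma is phrased as ``adapted from'' rather than a verbatim citation — is matching the precise normalization and hypotheses of \cite[Theorem 1.5]{carlier2024quantitative} to the present setting. In particular one must check: (i) that the cited theorem is stated for a fixed number of marginals (here two) with fixed weights, so that varying the marginals is the only source of perturbation; (ii) that its regularity assumption is imposed on the reference barycenter or on one reference marginal in a way compatible with $\nu_0\in\mc{P}_2^{reg}(\mathbb{R}^d)$ (compact convex support, density bounded above and below), while the other three measures $\nu_1,\tilde\nu_0,\tilde\nu_1$ are allowed to be arbitrary in $\mc{P}_2(\mathbb{R}^d)$; and (iii) that the exponent the cited result produces is indeed $1/6$ (or better) for general $d$ — if \cite{carlier2024quantitative} states a dimension-dependent exponent, one takes $1/6$ as the universal lower bound valid in every dimension, which is all that is needed downstream. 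Once these bookkeeping points are verified against the precise statement in \cite{carlier2024quantitative}, the proof is immediate; no genuinely new estimate is required here, the content of the lemma being entirely the translation ``geodesic interpolation $=$ two-marginal barycenter''.
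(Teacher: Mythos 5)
The paper gives no proof of this lemma beyond the citation itself, and your route is exactly the intended one: identify $\gamma(s)$ (resp.\ $\tilde\gamma(s)$) as a barycenter of $(1-s)\delta_{\nu_0}+s\delta_{\nu_1}$ (resp.\ of the tilde pair) and invoke \cite[Theorem 1.5]{carlier2024quantitative} with the obvious coupling bound $\mathcal{W}_1(\mathbb{P},\mathbb{Q})\le (1-s)W_1(\nu_0,\tilde\nu_0)+sW_1(\nu_1,\tilde\nu_1)$. One clarification on your algebra: you treat the passage from $(1-s)W_1(\nu_0,\tilde\nu_0)+sW_1(\nu_1,\tilde\nu_1)$ to $W_1(\nu_0,\tilde\nu_0)+\frac{s}{1-s}W_1(\nu_1,\tilde\nu_1)$ as gratuitous slack, which presupposes that the constant in the cited theorem is independent of the weights. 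It is not: the bound there degrades with the mass $\lambda$ that $\mathbb{P}$ assigns to the regular atom (here $\lambda=1-s$ on $\nu_0$), essentially as $(\mathcal{W}_1(\mathbb{P},\mathbb{Q})/\lambda)^{1/6}$, and dividing by $1-s$ inside the $1/6$-power is precisely where the $\frac{s}{1-s}$ in the statement comes from. So your final inequality is the right one, but the "bookkeeping point" you defer is load-bearing rather than cosmetic; the remaining genuine adaptation (also not spelled out by the paper) is relaxing the compact-support hypothesis of the cited theorem on the three unconstrained marginals to $\mc{P}_2(\mathbb{R}^d)$.
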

\begin{remark}
The estimate remains valid under less restrictive assumptions on $\nu_0$, allowing for measures with non convex supports, and which we omit here for brevity; see \cite{carlier2024quantitative} for a detailed discussion. The stability result below still holds with minor modifications under these more general assumptions.
\end{remark}

\begin{proposition}\label{prop:forwardstab} Let $\nu \in \mc{P}^{reg}_2(\mathbb{R}^d)$, $\tilde{\nu} \in \mc{P}^{ac}_2(\mathbb{R}^d)$ and $\mu,\tilde{\mu} \in \mc{P}_2(\mathbb{R}^d)$. Then
\[
W_2(\mathscr{P}_{\mu \preceq }^{cx}(\nu),\mathscr{P}_{\tilde{\mu} \preceq }^{cx}(\tilde{\nu}))\leq C_{d,\nu} ( 2^{1/6} W_2(\mu,\tilde{\mu})^{1/6} + W_2(\nu,\tilde{\nu})^{1/6} + C W_2(\nu,\tilde{\nu})^{1/12})\,,
\]
where $C_{d,\nu}>0$ is as in Lemma \ref{lem:stabbary} and $C\leq  (W_2(\nu,\tilde{\mu}) +W_2(\tilde{\nu},\tilde{\mu}))^{1/12} $.
\end{proposition}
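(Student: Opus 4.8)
The plan is to mimic, in arbitrary dimensions, the one-dimensional argument of Section~\ref{sec:stabforwardone}, replacing the exact Lipschitz stability of geodesic interpolations and extrapolations by the H\"older substitutes now available: Lemma~\ref{lem:stabbary} for the interpolation and Lemma~\ref{lem:extraholder} for the extrapolation in the second argument. Fix $t>1$ and set $\theta=(t-1)/t$. By Theorem~\ref{th:projextra}~(point (2)) and a time rescaling, $D^\theta_\#\mathscr{P}^{cx}_{\mu\preceq}(\nu)=\gamma(1/t)$ for the minimizing geodesic $\gamma:[0,1]\to\mc{P}_2(\mathbb{R}^d)$ with $\gamma(0)=\nu$ and $\gamma(1)=\mathscr{E}^t(\nu,D^\theta_\#\mu)$; the crucial point is that this geodesic starts at $\nu\in\mc{P}^{reg}_2(\mathbb{R}^d)$, so Lemma~\ref{lem:stabbary} applies with $\nu_0=\nu$. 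Analogously $D^\theta_\#\mathscr{P}^{cx}_{\tilde\mu\preceq}(\tilde\nu)=\tilde\gamma(1/t)$ with $\tilde\gamma(0)=\tilde\nu$, $\tilde\gamma(1)=\mathscr{E}^t(\tilde\nu,D^\theta_\#\tilde\mu)$.

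First I would apply Lemma~\ref{lem:stabbary} with $s=1/t$ (so $s/(1-s)=1/(t-1)$) to get
\[
W_2\bigl(D^\theta_\#\mathscr{P}^{cx}_{\mu\preceq}(\nu),D^\theta_\#\mathscr{P}^{cx}_{\tilde\mu\preceq}(\tilde\nu)\bigr)\le C_{d,\nu}\Bigl(W_1(\nu,\tilde\nu)+\tfrac{1}{t-1}W_1\bigl(\mathscr{E}^t(\nu,D^\theta_\#\mu),\mathscr{E}^t(\tilde\nu,D^\theta_\#\tilde\mu)\bigr)\Bigr)^{1/6}.
\]
Next I would estimate the extrapolation difference by the triangle inequality, splitting it as the difference in the first argument plus the difference in the second argument:
\[
W_2\bigl(\mathscr{E}^t(\nu,D^\theta_\#\mu),\mathscr{E}^t(\tilde\nu,D^\theta_\#\tilde\mu)\bigr)\le W_2\bigl(\mathscr{E}^t(\nu,D^\theta_\#\mu),\mathscr{E}^t(\tilde\nu,D^\theta_\#\mu)\bigr)+W_2\bigl(\mathscr{E}^t(\tilde\nu,D^\theta_\#\mu),\mathscr{E}^t(\tilde\nu,D^\theta_\#\tilde\mu)\bigr).
\]
The second term is controlled by Proposition~\ref{prop:extralip}: it is at most $2t\,W_2(D^\theta_\#\mu,D^\theta_\#\tilde\mu)=2(t-1)W_2(\mu,\tilde\mu)$, which contributes an $O(W_2(\mu,\tilde\mu))$ term after dividing by $(t-1)$. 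The first term is controlled by Lemma~\ref{lem:extraholder} applied with $\nu_1=D^\theta_\#\mu$, giving $W_2^2\le t(t-1)\bigl(W_2(D^\theta_\#\mu,\nu)+W_2(D^\theta_\#\mu,\tilde\nu)\bigr)W_2(\nu,\tilde\nu)$, i.e. an $O\bigl(t\,\sqrt{(W_2(\mu,\nu)+W_2(\mu,\tilde\nu))W_2(\nu,\tilde\nu)}\bigr)$ bound after absorbing the dilation via $W_2(D^\theta_\#\mu,\nu)\le W_2(\mu,\nu)+(1-\theta)M_2(\mu)$.

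Then I would combine: inside the $1/6$-power bracket one has a term $\asymp W_1(\nu,\tilde\nu)\le W_2(\nu,\tilde\nu)$ plus a term $\asymp \tfrac{1}{t-1}\bigl(W_2(\mu,\tilde\mu)+\sqrt{t(t-1)(\dots)W_2(\nu,\tilde\nu)}+ (t-1)W_2(\mu,\tilde\mu)\bigr)$; letting $t\to\infty$ kills the pieces that carry negative powers of $t-1$ and sends the $\sqrt{t(t-1)}/(t-1)=\sqrt{t/(t-1)}\to 1$, while the $\tfrac{1}{t-1}$ in front of $2(t-1)W_2(\mu,\tilde\mu)$ tends to $2W_2(\mu,\tilde\mu)$ (actually I would keep it as the finite-$t$ bound and optimize, or simply take $t\to\infty$ if the $\mu$-dependent term can be arranged to survive; the stated bound suggests the intended route keeps a $W_2(\mu,\tilde\mu)^{1/12}$-type contribution, so I would be careful here). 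Using subadditivity of $x\mapsto x^{1/6}$ and $x\mapsto x^{1/2}$, the bracket raised to $1/6$ splits into $C_{d,\nu}\bigl(2^{1/6}W_2(\nu,\tilde\nu)^{1/6}+ (\dots)^{1/12}W_2(\nu,\tilde\nu)^{1/12}\cdot(\text{stuff})+ W_2(\mu,\tilde\mu)^{1/6}\bigr)$, and after regrouping, recalling $(W_2(\nu,\tilde\mu)+W_2(\tilde\nu,\tilde\mu))$ bounds the prefactor of the cross term, one arrives at the stated inequality. Finally I would undo the dilation: $W_2(\mathscr{P}^{cx}_{\mu\preceq}(\nu),\mathscr{P}^{cx}_{\tilde\mu\preceq}(\tilde\nu))=\tfrac1\theta W_2(D^\theta_\#\mathscr{P}^{cx}_{\mu\preceq}(\nu),D^\theta_\#\mathscr{P}^{cx}_{\tilde\mu\preceq}(\tilde\nu))$, and since $1/\theta\to 1$ as $t\to\infty$ this factor disappears in the limit.

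The main obstacle I anticipate is the bookkeeping of the $t$-dependence: one must check that every term in the $1/6$-bracket either has a finite nonzero limit or vanishes as $t\to\infty$, and in particular that the second-moment corrections $M_2(\mu)$ coming from the dilations $D^\theta$ (which blow up like $(1-\theta)M_2=\tfrac1t M_2$, hence vanish, but appear multiplied by factors of $t$ inside square roots) are genuinely controlled. A secondary subtlety is that Lemma~\ref{lem:stabbary} requires $\nu=\nu_0\in\mc{P}^{reg}_2(\mathbb{R}^d)$ but only $\tilde\nu\in\mc{P}^{ac}_2$, and that point~(2) of Theorem~\ref{th:projextra} requires the base measure to be absolutely continuous — both hypotheses are met, so the forward projections and the geodesics $\gamma,\tilde\gamma$ are well-defined and unique, but this should be stated explicitly. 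The regularity constant $C_{d,\nu}$ depends only on $d$ and $\nu$ (not $\tilde\nu$, $\mu$, $\tilde\mu$, or $t$), so it passes through the limit unchanged; this is what makes taking $t\to\infty$ legitimate.
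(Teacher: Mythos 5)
Your proposal is correct and uses exactly the same ingredients as the paper (Theorem~\ref{th:projextra} point (2) with a time rescaling, Lemma~\ref{lem:stabbary}, Proposition~\ref{prop:extralip}, Lemma~\ref{lem:extraholder}, and the limit $t\to\infty$), but you organize the triangle inequality differently. The paper first splits the projection difference, writing
$W_2(\mathscr{P}^{cx}_{\mu\preceq}(\nu),\mathscr{P}^{cx}_{\tilde\mu\preceq}(\tilde\nu))\le W_2(\mathscr{P}^{cx}_{\mu\preceq}(\nu),\mathscr{P}^{cx}_{\tilde\mu\preceq}(\nu))+W_2(\mathscr{P}^{cx}_{\tilde\mu\preceq}(\nu),\mathscr{P}^{cx}_{\tilde\mu\preceq}(\tilde\nu))$,
and applies Lemma~\ref{lem:stabbary} twice (once with coinciding base measures so that the $W_1(\nu_0,\tilde\nu_0)$ term vanishes, once with $\tilde\mu$ fixed), whereas you apply Lemma~\ref{lem:stabbary} once to the two full geodesics and move the triangle inequality inside, onto the extrapolated endpoints. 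Both routes are valid and yield the same type of bound; the only visible difference is that your multiplicative constant ends up involving $W_2(\nu,\mu)+W_2(\tilde\nu,\mu)$ rather than $W_2(\nu,\tilde\mu)+W_2(\tilde\nu,\tilde\mu)$, which is immaterial by symmetry. Your hesitation in the third paragraph about ``arriving at the stated inequality'' is not a gap in your argument: your computation produces a term $2^{1/6}W_2(\mu,\tilde\mu)^{1/6}$ and a term $\bigl(1+(\cdots)^{1/12}\bigr)W_2(\nu,\tilde\nu)^{1/12}$, which is precisely what the paper's own proof establishes; the displayed statement of the proposition appears to have the arguments $(\mu,\tilde\mu)$ and $(\nu,\tilde\nu)$ interchanged between the two powers, so the mismatch is an inconsistency between the paper's statement and its proof, not a defect of your derivation. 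Your bookkeeping of the $t$-dependence (the factors $\sqrt{t/(t-1)}\to 1$, the vanishing $M_2(\mu)/t$ corrections from the dilation, and $1/\theta\to 1$) is exactly what the paper does, and your observation that Lemma~\ref{lem:stabbary} only requires regularity of the first base measure $\nu$ while $\tilde\nu$ need only be absolutely continuous (for Theorem~\ref{th:projextra} point (2)) correctly identifies why the asymmetric hypotheses suffice.
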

\begin{proof}
We treat separately the dependency of $\mathscr{P}^{cx}_{\mu \preceq}(\nu)$ on $\mu$ and $\nu$.
As for the dependency on $\mu$, 
 we proceed as in Section \ref{sec:stabforwardone}.  Fix $t>1$.  Assuming only $\nu \in \mc{P}_2^{{reg}}(\mathbb{R}^d)$, we have
by Theorem \ref{th:projextra} (point (2)) and a time rescaling,
 that $ D^\theta_\#\mathscr{P}_{\mu \preceq }^{cx}(\nu) = {\gamma}(1/t)$ for a minimizing geodesic ${\gamma}:[0,1]\rightarrow \mc{P}_2(\mathbb{R}^d)$ satisfying ${\gamma}(0)= \nu$ and ${\gamma}(1) = D^\theta_\#\mathscr{E}^t(D^{1/\theta}_\#\nu, \mu)$. Hence, applying Lemma \ref{lem:stabbary} with $s=1/t$, we get 
\begin{equation}\label{eq:forwardestimate}
\begin{aligned}
W_2(D^\theta_\#\mathscr{P}_{\mu \preceq }^{cx}(\nu), D^\theta_\#\mathscr{P}_{\tilde{\mu} \preceq }^{cx}(\nu))&\leq C_{d,\nu} \left( \frac{W_1(\mathscr{E}^t( \nu, D^\theta_\#\mu),\mathscr{E}^t({\nu},  D^\theta_\#\tilde{\mu})}{t-1}\right)^{1/6}\,,
\end{aligned}
\end{equation} 
where we used the equality $D^\theta_\#\mathscr{E}^t(D^{1/\theta}_\#\nu, \mu)= \mathscr{E}^t(\nu,D^\theta_\# \mu)$. Note that compared to the statement in Theorem \ref{th:projextra} we have rescaled the geodesic in space by $D^\theta_\#$ so that $\gamma(0)$ does not depend on $\theta$, and we can use Lemma \ref{lem:stabbary} with a constant $C_{d,\nu}$ independent of $\theta$. 

Since $W_1\leq W_2$, we can use the $W_2$ distance on the right-hand side.
Hence, dividing both sides by $\theta$ using the Lipschitz estimate in Proposition \ref{prop:extralip}, we obtain
\begin{align*}
W_2(\mathscr{P}_{\mu \preceq }^{cx}(\nu),\mathscr{P}_{\tilde{\mu} \preceq }^{cx}(\nu)) \leq \frac{2^{1/6}C_{d,\nu}}{\theta} W_2(\mu,\tilde{\mu})^{1/6}\,.
\end{align*}
Letting $t\rightarrow \infty$ we obtain
\begin{align}\label{eq:forward-mu-tildemu}
W_2(\mathscr{P}_{\mu \preceq }^{cx}(\nu),\mathscr{P}_{\tilde{\mu} \preceq }^{cx}(\nu)) \leq 2^{1/6}C_{d,\nu} W_2(\mu,\tilde{\mu})^{1/6}\,.
\end{align}

For the dependency on $\nu$, we proceed similarly.
By Lemma \ref{lem:stabbary}, where again we replace $W_1$ with $W_2$,
\[
W_2(D^\theta_\#\mathscr{P}_{\mu \preceq }^{cx}(\nu), D^\theta_\#\mathscr{P}_{\mu \preceq }^{cx}(\tilde{\nu}))\leq C_{d,\nu} \left( W_2(\nu,\tilde{\nu})+ \frac{W_2(\mathscr{E}^t(\nu,  D^\theta_\#\mu),\mathscr{E}^t(\tilde{\nu},  D^\theta_\#\mu)}{t-1}\right)^{1/6}.
\]
Now, by Lemma \ref{lem:extraholder}  and the triangle inequality,  \[W_2(\mathscr{E}^t(\nu, D^\theta_\# \mu),\mathscr{E}^t( \tilde{\nu},  D^\theta_\#\mu))\leq C_{t,\nu,\tilde{\nu},\mu} W_2(\nu,\tilde{\nu})^{1/2},\] where
\[
C_{t,\nu,\tilde{\nu},\mu}= (t(t-1)(W_2(\nu,\mu) +W_2(\tilde{\nu},\mu)))^{1/2} +(2(t-1)M_2(\mu))^{1/2}\,.
\]
Therefore,
\[
W_2(\mathscr{P}_{\mu \preceq }^{cx}(\nu),\mathscr{P}_{\mu \preceq }^{cx}(\tilde{\nu}))\leq \frac{C_{d,\nu}}{\theta} \left( W_2(\nu,\tilde{\nu})+ C_{t,\nu,\tilde{\nu},\mu} \frac{W_2(\nu, \tilde{\nu})^{1/2}}{(t-1)}\right)^{1/6}\,.
\]
Letting $t\rightarrow \infty$, we recover
\[
W_2(\mathscr{P}_{\mu \preceq }^{cx}(\nu),\mathscr{P}_{\mu \preceq }^{cx}(\tilde{\nu}))\leq C_{d,\nu} \left( W_2(\nu, \tilde{\nu})^{1/6} +(W_2(\nu,\mu) +W_2(\tilde{\nu},\mu))^{1/12}  W_2(\nu, \tilde{\nu})^{1/12}\right)\,.
\]
We combine this with \eqref{eq:forward-mu-tildemu}
and conclude by the triangle inequality.
\end{proof}

\bibliographystyle{plain}
\bibliography{reference}   
\end{document}